\newcommand\Z{{\mathbb Z}}
\newcommand\Zpos{\Z_{\ge1}}
\newcommand\Zneg{\Z_{\le-1}}
\newcommand\Znn{\Z_{\ge0}}
\newcommand\ol[1]{\overline{#1}}
\newcommand\Q{{\mathbb Q}}
\newcommand\Qnn{\Q_{\ge0}}
\newcommand\R{{\mathbb R}}
\newcommand\C{{\mathbb C}}
\newcommand\SetT{{\mathcal T}}
\newcommand\F{{\mathbb F}}
\newcommand\lset{\{\,}
\newcommand\rset{\,\}}
\newcommand\lra{\longrightarrow}
\newcommand\bs{\backslash}
\newcommand\inv{^{-1}}
\newcommand\tr{\operatorname{tr}}
\newcommand\eg{{\em e.g.}}
\newcommand\ie{{\em i.e.}}
\newcommand\ip[2]{\langle #1,#2\rangle} 
\newcommand\siX[1]{{\mathcal X}_{#1}} 
\newcommand\Xtwo{\siX2(1)}
\newcommand\Xtwosemi{\Xtwo^{\rm semi}}
\newcommand\XtwoN{\siX2(N)}
\newcommand\XtwoNsemi{\XtwoN^{\rm semi}}
\newcommand\SL[2]{\operatorname{SL}_{#1}(#2)}
\newcommand\SLtwoZ{\SL2\Z}
\newcommand\GL[2]{\operatorname{GL}_{#1}(#2)}
\newcommand\GLtwoZ{\GL2\Z}
\newcommand\GLtwoR{\GL2\R}
\newcommand\Spgp{\operatorname{Sp}}
\newcommand\Sp[2]{\operatorname{Sp}_{#1}(#2)}
\newcommand\SptwoZ{\Sp2\Z}
\newcommand\SptwoQ{\Sp2\Q}
\newcommand\SptwoR{\Sp2\R}
\newcommand\Gamzero{\Gamma_{\!0}}
\newcommand\GzN{\Gamzero(N)}
\newcommand\Ptwoone{\operatorname{P}_{2,1}}
\newcommand\Ptwozero{\operatorname{P}_{2,0}}
\newcommand\PtwozeroQ{{\rm P}_{2,0}(\Q)}
\newcommand\cmat[4]{\left[\begin{array}{cc}
{#1}&{#2}\\{#3}&{#4}\end{array}\right]}
\newcommand\smallmat[4]{\left[\begin{smallmatrix}
{#1}&{#2}\\{#3}&{#4}\end{smallmatrix}\right]}
\newcommand\smallmatabcd{\smallmat abcd}
\newcommand\matoplus{\boxplus}
\newcommand\paramodulargroup[1]{{\rm K}(#1)}
\newcommand\KN{\paramodulargroup N}
\newcommand\KNq{\paramodulargroup{Nq}}
\newcommand\pvar{{\cmat\tau zz\omega}} 
\newcommand\smallpvar{{\smallmat\tau zz\omega}}
\newcommand\fcJ[2]{c(#1;#2)} 
\newcommand\GamsupzeroN{\Gamma^0(N)}
\newcommand\GampmsupzeroN{\Gamma_\pm^0(N)}
\newcommand\UHP{{\mathcal H}} 
\newcommand\UHPtwo{{\UHP_2}}
\newcommand\wtvar[2]{[#1]_{#2}}
\newcommand\wtk[1]{\wtvar{#1}k}
\newcommand\fc[2]{a(#1;#2)} 
\newcommand\e{{\rm e}} 
\newcommand\MFsnoweight{{\mathcal M}} 
\newcommand\MFswtgp[2]{\MFsnoweight_{#1}(#2)}
\newcommand\MFs[1]{\MFswtgp k{#1}} 
\newcommand\CFsnoweight{{\mathcal S}} 
\newcommand\CFswtgp[2]{\CFsnoweight_{#1}(#2)}
\newcommand\CFs[1]{\CFswtgp k{#1}} 
\newcommand\MkKN{\MFswtgp k\KN}
\newcommand\MtwoKN{\MFswtgp2\KN}
\newcommand\SkKN{\CFswtgp k\KN}
\newcommand\StwoKN{\CFswtgp2\KN}
\newcommand\StwoKlevel[1]{\CFswtgp2{\paramodulargroup{#1}}}
\newcommand\SfourKN{\CFswtgp4\KN}
\newcommand\JkNcusp[2]{{\rm J}_{#1,#2}^{\rm cusp}}
\newcommand\JtwoNcusp{\JkNcusp2N}
\newcommand\JkmNcusp{\JkNcusp k{mN}}
\newcommand\JkNwh[2]{{\rm J}_{#1,#2}^{\rm w.h.}}
\newcommand\JzeroNwh{\JkNwh0N}
\newcommand\Grit{{\rm Grit}}
\newcommand\GritJkNcusp[2]{\Grit(\JkNcusp{#1}{#2})}
\newcommand\GritJtwoNcusp{\GritJkNcusp2N}
\newcommand{\projsp}{{\mathbb P}}
\newcommand\mymod{\text{ mod }}
\renewcommand{\Im}[1]{{\rm Im}(#1)}
\newcommand\iotast{\iota^*}
\newcommand\iotastf{\iota^*\!f}
\newcommand\TB{{\rm TB}}
\newcommand\BL{{\rm Borch}}
\newcommand\mmax{{m_{\rm max}}}
\theoremstyle{plain}
\newtheorem{theorem}{Theorem}[section]
\newtheorem{definition}[theorem]{Definition}
\newtheorem{lemma}[theorem]{Lemma}
\newtheorem{proposition}[theorem]{Proposition}
\newtheorem{corollary}[theorem]{Corollary}
\begin{document}
\title[Siegel Paramodular Forms of Weight~$2$ and Squarefree Level]
{Siegel Paramodular Forms of Weight~$2$ and Squarefree Level}

\author[C.~Poor]{Cris Poor}
\address{Dept.~of Mathematics, Fordham University, Bronx, NY 10458 USA}
\email{poor@fordham.edu}

\author[J.~Shurman]{Jerry Shurman}
\address{Reed College, Portland, OR 97202 USA}
\email{jerry@reed.edu}

\author[D.~Yuen]{David S.~Yuen}
\address{Dept.~of Mathematics and Computer Science, Lake Forest
College, 555 N.~Sheridan Rd., Lake Forest, IL 60045 USA}
\email{yuen@lakeforest.edu}

\subjclass[2010]{Primary: 11F46; secondary: 11F55,11F30,11F50}
\date{\today}

\begin{abstract}
We compute the space $\StwoKN$ of weight~$2$ Siegel paramodular
cusp forms of squarefree level~$N<300$.  In conformance with the
paramodular conjecture of A.~Brumer and K.~Kramer, the space is only
the additive (Gritsenko) lift space of the Jacobi cusp form space
$\JtwoNcusp$ except for $N=249,295$, when it further contains one
nonlift newform.
For these two values of~$N$, the Hasse--Weil $p$-Euler factors of a
relevant abelian surface match the spin $p$-Euler factors of the
nonlift newform for the first two primes~$p\nmid N$.
\end{abstract}

\keywords{Siegel paramodular cusp form, Borcherds product}

\maketitle

\section{Introduction\label{secI}}

The paramodular conjecture of A.~Brumer and K.~Kramer \cite{brkr14}
says, in slight paraphrase and restricted to the case of abelian surfaces:
\begin{quote}
{\em For any positive integer~$N$ there is a one-to-one correspondence
  between isogeny classes of abelian surfaces~$A$ over~$\Q$ of
  conductor~$N$ with ${\rm End}_\Q(A)=\Z$, and lines $\C f$ for
  nonlift degree~$2$ Siegel paramodular Hecke newforms $f$ of
  weight~$2$ and level~$N$ having rational eigenvalues.  Moreover,
  the Hasse--Weil $L$-function of~$A$ and the spin $L$-function of~$f$
  should match, and the $\ell$-adic representation of
  $\mathbb T_\ell(A)\otimes\Q_\ell$ should be isomorphic to those
  associated to~$f$ for any~$\ell$ prime to~$N$.}
\end{quote}
Here the lift space is $\GritJtwoNcusp$, the Gritsenko (additive) lift
of the Jacobi cusp form space of weight~$2$ and index~$N$.  This lift
space lies inside the Siegel paramodular cusp form space
$\StwoKN$---the subscript~$2$ indicates the weight, and $\KN$ denotes
the paramodular group of degree~$2$ and level~$N$; the degree is
omitted from the notation because all Siegel modular forms in this
article have degree~$2$.
Newforms on~$\KN$ are by definition Hecke eigenforms orthogonal to the
images of level-raising operators from paramodular forms of lower levels
\cite{robertsschmidt07}.
Notation and terminology will be reviewed in section~\ref{secB}.

In \cite{py15}, the first and third authors of this article studied
$\StwoKN$ for prime levels $N<600$.  Because $\dim\StwoKN$ is unknown
in general, algorithms were used to bound this dimension by working
in~$\SfourKN$, whose dimension is known for prime~$N$ by work of
T.~Ibukiyama \cite{i85,i07}.  The algorithms proved that
$\StwoKN=\GritJtwoNcusp$ for all prime~$N<600$ other than the
exceptional cases $N=277,349,353,389,461,523,587$, precisely the
prime~$N<600$ for which relevant abelian surfaces exist.  Also,
$\StwoKlevel{277}$ contains one nonlift dimension $\C f_{277}$, and
A.~Brumer and J.~Voight and the first and third authors of this
article have shown that the equality of $L(f_{277},s,{\rm spin})$ and
$L(A_{277},s,\text{Hasse--Weil})$ holds conditionally on the existence
of certain Galois representations \cite{yuen15}.  A nonlift eigenform
in $\StwoKlevel{587}^-$ has been constructed as well \cite{gpy16}.  We
are currently working on constructing nonlift forms in the remaining
levels.

In \cite{bpy16}, J.~Breeding and the first and third authors of this
article showed that $\StwoKN=\GritJtwoNcusp$ for all~$N\le60$.
A key method here was Jacobi restriction, to be described briefly in
section~\ref{secB}, and see also \cite{ipy13}.  The article \cite{bpy16}
established a sufficient number of Fourier--Jacobi coefficients
necessary to make Jacobi restriction rigorous at a given level.
Running Jacobi restriction to this many coefficients was tractable for
levels up to~$60$.  One idea of the present article is that new
algorithms reduce the number of Fourier--Jacobi coefficients known to
be sufficient to certify Jacobi restriction, making the number small
enough that running Jacobi restriction with that many coefficients is
tractable for higher levels.

This article reports our investigation for squarefree composite
levels~$N<300$, necessarily using different methods from \cite{py15}.
Among these levels that are also odd, isogeny classes of abelian
surfaces exist only for $N=249$ and $N=295$, and at those two levels,
the one known isogeny class contains Jacobians of hyperelliptic curves.
Specifically, we may take $A_{249}$ to be the Jacobian of
$y^2=x^6+4x^5+4x^4+2x^3+1$ and $A_{295}$ to be the Jacobian of
$y^2=x^6-2x^3-4x^2+1$ \cite{brkr14}.  Our computations affirm that
indeed $\StwoKN$ contains one nonlift dimension for $N=249,295$, but
otherwise is only the lift space $\GritJtwoNcusp$.  For $N=249,295$,
we construct the nonlift eigenform as the sum of a Borcherds product
and a Gritsenko lift, and we show that its first two good spin
$p$-Euler factors match the Hasse--Weil $p$-Euler factors of the
relevant abelian surface.  Thus the main result of this article is
as follows.

\begin{theorem}
For all composite squarefree $N<300$ except $N=249$ and $N=295$,
$\StwoKN=\GritJtwoNcusp$.  For $N=249$ and $N=295$, $\StwoKN$ contains
one nonlift newform dimension $\C f_N$ beyond $\GritJtwoNcusp$.
For the abelian surfaces $A_{249}$ and~$A_{295}$ defined in the
previous paragraph, the $p$-Euler factor of
$L(f_{249},s,\text{spin\/})$ matches the $p$-Euler factor of
$L(A_{249},s,\text{Hasse--Weil\/})$ for $p=2$ and~$p=5$, and the
$p$-Euler factor of $L(f_{295},s,\text{spin\/})$ matches the $p$-Euler
factor of $L(A_{295},s,\text{Hasse--Weil\/})$ for $p=2$ and~$p=3$.
\end{theorem}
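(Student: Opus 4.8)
The plan is to separate the statement into three tasks: (i) the vanishing-type result $\StwoKN=\GritJtwoNcusp$ for all composite squarefree $N<300$ with $N\neq249,295$; (ii) the construction of a single nonlift newform $f_N$ at $N=249$ and $N=295$, together with the dimension count $\dim\StwoKN=\dim\GritJtwoNcusp+1$; and (iii) the Euler-factor comparison with the Hasse--Weil $L$-functions of $A_{249}$ and $A_{295}$. Task (i) is carried out by the Jacobi restriction method reviewed in section~\ref{secB}: for each such $N$, one restricts a hypothetical form in $\StwoKN$ to the Humbert-type Jacobi surfaces, obtaining a vector of Fourier--Jacobi coefficients lying in $\JkNcusp2{mN}$ for a range of indices~$m$; the new algorithms alluded to in the introduction cut down the number $\mmax$ of Fourier--Jacobi coefficients needed to certify that the restriction map is injective on the nonlift complement. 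One then verifies by direct linear algebra over a suitable finite field that, modulo $\GritJtwoNcusp$, the only solution to the restriction constraints up to index~$\mmax$ is zero. This must be run for every composite squarefree $N<300$, which is a finite but substantial computation.

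For task (ii), at $N=249$ and $N=295$ the above linear algebra instead exhibits a one-dimensional space of potential nonlift Fourier--Jacobi data; I would realize an actual paramodular form spanning it by writing $f_N$ explicitly as a sum of a Borcherds product $\BL_N$ and a Gritsenko lift $\Grit(\phi_N)$ for a suitable $\phi_N\in\JtwoNcusp$, so that the leading Fourier--Jacobi coefficients of $f_N$ are holomorphic and agree with the nonlift data. The Borcherds product is produced from a weakly holomorphic Jacobi form of weight~$0$ and index~$N$ whose principal part is forced by the required divisor; its holomorphy and the fact that it lies in $\MtwoKN$ (not merely meromorphic) is checked via the Gritsenko--Nikulin obstruction criterion, i.e.\ by verifying that the relevant theta-block/Borcherds input has nonnegative weight and the correct order of vanishing along all paramodular Humbert surfaces. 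One then checks that $f_N$ is a Hecke eigenform orthogonal to all Gritsenko lifts, hence a genuine newform, and that the Jacobi-restriction bound from task~(i) forces $\dim\StwoKN\le\dim\GritJtwoNcusp+1$, so the nonlift dimension is exactly one.

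For task (iii), I would extract the Hecke eigenvalues $\lambda_p(f_N)$ and $\mu_p(f_N)$ (for the two Hecke operators $T(p)$ and $T_1(p^2)$ away from the level) from enough Fourier coefficients of $f_N=\BL_N+\Grit(\phi_N)$; the Fourier expansion of a Borcherds product is computable to any desired precision from its divisor data, and that of the Gritsenko lift from $\phi_N$. These eigenvalues determine the spin $p$-Euler factor $\bigl(1-\lambda_p X+(\lambda_p^2-\mu_p-p^{2k-4})X^2-\lambda_p p^{2k-3}X^3+p^{4k-6}X^4\bigr)$ with $k=2$. On the other side, the Hasse--Weil $p$-Euler factor of $A_N=\mathrm{Jac}(C_N)$ for the stated genus-two curves $C_{249}:y^2=x^6+4x^5+4x^4+2x^3+1$ and $C_{295}:y^2=x^6-2x^3-4x^2+1$ is obtained by point-counting $\#C_N(\F_p)$ and $\#C_N(\F_{p^2})$ for the small good primes $p=2,5$ (resp.\ $p=2,3$), which pins down the numerator of the local zeta function. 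One then simply compares the two degree-four polynomials and observes they coincide.

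The main obstacle is task~(i): proving the \emph{unconditional} upper bound $\StwoKN=\GritJtwoNcusp$ requires that the sharpened Jacobi-restriction bound $\mmax(N)$ actually be small enough that the resulting linear algebra over all composite squarefree $N<300$ is feasible, and—more delicately—that the truncated restriction map provably detects the full nonlift space (not just heuristically). Establishing rigor of the truncation bound, i.e.\ that no nonlift cusp form can have its first $\mmax(N)$ Fourier--Jacobi coefficients lie in the lift space without being a lift, is where the new algorithmic input of the paper does its essential work; everything in tasks~(ii) and~(iii) is then a matter of exhibiting explicit forms and running well-understood computations to sufficient precision.
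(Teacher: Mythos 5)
Your overall architecture (Jacobi restriction for the generic levels, Borcherds product plus Gritsenko lift at $N=249,295$, eigenvalue extraction and comparison of degree-four Euler factors) matches the paper's, and your task~(iii) is essentially what is done, except that the paper compares against the curve data at {\tt lmfdb.org} rather than recounting points. But there is a genuine gap exactly where you yourself locate ``the main obstacle'': you offer no mechanism for certifying that Jacobi restriction truncated to a small number of Fourier--Jacobi coefficients gives a \emph{rigorous} upper bound on $\dim\StwoKN^\pm$, you only assert that ``new algorithms'' cut down $\mmax$. The theoretical truncation bounds of the earlier work are computationally out of reach here (e.g.\ $24$ terms at level~$286$), so without a replacement the entire task~(i), and hence also the dimension count $\dim\StwoKN=\dim\GritJtwoNcusp+1$ in task~(ii), is unproved. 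The paper's replacement is its central idea: analyze weight~$2$ via weight~$4$. One introduces the docked spaces $\StwoKN(d)$ of cusp forms whose Fourier--Jacobi coefficients $\phi_1,\dots,\phi_{d-1}$ vanish; if $\StwoKN^\pm(d)=0$ then restriction to $d-1$ terms is rigorous. Vanishing of the docked spaces is forced by mapping them multiplicatively into weight~$4$: products $f_1f_2$ with $f_1\in\StwoKN(d)$ span subspaces $H_4(N,d,\delta)$ of $\SfourKN$, a nonzero docked nonlift would (since the graded ring has no zero divisors) produce at least $\dim\JtwoNcusp+1$ independent weight-$4$ forms, and the supports of such products can be controlled by the minimum function $m_N(t)$ so that their Fourier coefficients vanish on an explicit determining set. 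Comparing with the \emph{known} dimension of $\SfourKN$ (Ibukiyama--Kitayama) then yields the four $H_4$ tests---but only after one has explicitly spanned all or almost all of $\SfourKN$, which is the bulk of the computation and is done by tracing $\GritJkNcusp4{Nq}$ and $(\GritJkNcusp2{Nq})^2$ down from level $Nq$, by Hecke spreading $(\GritJtwoNcusp)^2$, and by weight-$4$ Borcherds products. None of this appears in your proposal.

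Two smaller points. First, your holomorphy check for $\BL(\psi_N)$ (nonnegative Humbert multiplicities) gives membership in $\MtwoKN$, but cuspidality is a separate issue; the paper settles it with a cuspidality criterion special to squarefree level and weight $2$ (or odd, or the low even weights), namely that the Witt image lands in $\MFs\SLtwoZ\otimes\MFs\SLtwoZ\wtk{\smallmat N001}=0$, so $\MtwoKN=\StwoKN$. Second, ``eigenform orthogonal to the lifts'' does not by itself make $f_N$ a newform; the paper argues newness by checking that the possible lower levels ($3$ and $83$ for $249$; $5$ and $59$ for $295$) have cuspidal spaces that are zero or consist of lifts, and that level-raising operators take lifts to lifts.
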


For the level~$249$ nonlift newform, let $\vartheta(\tau,z)$ be
Jacobi's odd theta function and let $\vartheta_r(\tau,z)=\vartheta(\tau,rz)$
for~$r\in\Zpos$, and introduce a product of quotients of theta functions,
$$
\psi_{249}=\frac{\vartheta_8}{\vartheta_1}
\frac{\vartheta_{18}}{\vartheta_6}\frac{\vartheta_{14}}{\vartheta_7}\,.
$$
The nonlift newform of~$\StwoKlevel{249}$ is then
\begin{align*}
f_{249}&=
14\,\BL(\psi_{249})\\
&\quad-6\,\Grit(\TB(2; 2,3,3,4,5,6,7,9,10,13))\\
&\quad-3\,\Grit(\TB(2;2,2,3,5,5,6,7,9,11,12 ))\\
&\quad+3\,\Grit(\TB(2;1,3,3,5,6,6,6,9,11,12 ))\\
&\quad+2\,\Grit(\TB(2; 1,1,2,3,4,5,6,9,10,15))\\
&\quad+7\,\Grit(\TB(2; 1,2,3,3,4,5,6,9,11,14)),
\end{align*}
where ``$\BL$'' and ``$\Grit$'' and~``$\TB$'' respectively denote
the Borcherds product, the Gritsenko lift, and the theta block
construction.
This will be explained further in section~\ref{secL249295}.
Thus $f_{249}$ is congruent to a Gritsenko lift modulo~$14$, and we
note that the two isogenous Jacobians of genus~$2$ curves of
conductor~$249$ defined over~$\Q$ have torsion groups $\Z/14\Z$
and~$\Z/28\Z$, as shown at {\tt lmfdb.org}.

Again with $\dim\StwoKN$ unavailable, we used algorithms to bound this
dimension by working in~$\SfourKN$, whose dimension is known by work
of T.~Ibukiyama and H.~Kitayama \cite{ik}.
These algorithms require spanning most of the weight~$4$ space but not
necessarily all of it: the requisite spanned subspace can fall short
essentially by the dimension of the Jacobi cusp form space $\JtwoNcusp$,
which is known by work of N.~Skoruppa and D.~Zagier \cite{ez85,sz89}.
Thus our major computational challenge was to span enough of~$\SfourKN$.
This space presents various impediments to diverse spanning methods,
so we had to employ a range of approaches.  The methods that worked
for prime~$N$ generally did not help for composite squarefree~$N$:
Hecke spreading, which spanned the Fricke plus space for large prime
levels, is obstructed for composite levels by the various
Atkin--Lehner signatures that are possible; and theta tracing, which
spanned the Fricke plus space for small prime levels and the Fricke
minus space for large prime levels, appears to be more expensive
computationally for composite levels.
For composite squarefree~$N$, our methods are to trace $(\GritJkNcusp2{Nq})^2$
and $\GritJkNcusp4{Nq}$ down to level~$N$ from level~$Nq$ for a small
prime~$q$ that does not divide~$N$, to Hecke spread $(\GritJtwoNcusp)^2$
at level~$N$, and to compute Borcherds products in the Fricke plus and
minus spaces at level~$N$.  When enough of the weight~$4$ space is
spanned, the algorithms for weight~$2$ show that Jacobi restriction
computations with only a small number of Fourier--Jacobi coefficients
are rigorous, and these computations give the results.

See the website \cite{yuen16b} for reports on the computations that
this article discusses.  For example, we sketch the online report for
level $N=286$, which the reader could examine alongside this paragraph.
The space $\CFswtgp4{\paramodulargroup{286}}$ has $189$ dimensions, of
which the lift space $\GritJkNcusp4{286}$ comprises~$48$.  Jacobi
restriction heuristically finds $113$ more Fricke plus space
dimensions, giving $161$ Fricke plus space dimensions altogether, and
$28$ Fricke minus space dimensions.  This heuristic information is
essential for targeting our constructions, \eg, deciding when to
switch from one method to another.  Tracing the weight~$4$ Gritsenko
lifts and the twofold products of weight~$2$ Gritsenko lifts from
level $1430=286\cdot5$ down to level $286$ and then adding the twofold
products of weight~$2$ Gritsenko lifts at level~$286$ gives $157$ plus
space dimensions and no minus space dimensions.  Hecke spreading gives
$8$ minus dimensions.  Adding in Borcherds products raises the spanned
plus and minus space dimensions to~$161$ and~$27$, so one dimension is
missing and we think that it lies in the minus space.  This gives
enough of the weight~$4$ space to run our weight~$2$ diagnostic tests,
to be described in section~\ref{secAW2W4}.
The $H_4(286,3,1)^+$ test says that weight~$2$ Jacobi restriction to
two or more terms gives a dimension upper bound of the Fricke plus
space $\StwoKlevel{286}^+$.  Thus, weight~$2$ Jacobi restriction to
five terms, which we have carried out, correctly bounds the dimension
of the Fricke plus space by~$3$, the dimension of the lift space
$\GritJkNcusp2{286}$; this shows that these two spaces are equal.  The
$H_4(286,1,1)^-$ test says that the Fricke minus space is~$0$.  So
altogether $\StwoKlevel{286}$ is the lift space.  Whereas the
$H_4(286,3,1)^+$ test says that Jacobi restriction to two or more
terms gives a Fricke plus space dimension upper bound, the theoretical
bound used in \cite{bpy16} says this only for $24$ or more terms.
This improvement is crucial: running Jacobi restriction systematically
across many levels to the number of terms required by the bound used
in \cite{bpy16} is computationally unviable for now.

The Borcherds products used in the computation at level~$N=286$
are given at the website \cite{yuen16b}.
For the Fricke plus space, the relevant file at the website explains
that $60$ Borcherds products $\BL(\psi)$ were used to find the
additional four dimensions reported; each $\psi$ lies in the space
$\JkNwh0{286}$ of weight~$0$, index~$286$ weakly holomorphic Jacobi
forms, and is a linear combination of a basis of
$\JkNcusp{12}{286}/\Delta_{12}$ where $\Delta_{12}\in\JkNcusp{12}0$ is
the classical discriminant function.
The basis of $\JkNcusp{12}{286}/\Delta_{12}$, built from theta blocks,
is given at the end of the file.  The website's file for the Fricke
minus space is similar, explaining that $22$ Borcherds products
$\BL(\psi)$ were used to find the additional $19$ dimensions reported,
with each $\psi$ now the sum of a quotient $\phi|V_2/\phi$ and a
linear combination of the basis of $\JkNcusp{12}{286}/\Delta_{12}$;
here $\phi$ is a theta block and $V_2$ is an index-raising Hecke
operator.  Our Borcherds product files will be described further in
section~\ref{secBP}.

Section~\ref{secB} gives background for this article.
Section~\ref{secC} shows that for low or odd weight and squarefree
level, either all Siegel paramodular forms are cusp forms or the vanishing
of a Siegel paramodular form's constant term suffices to make it a cusp form.
Section~\ref{secAW2W4} establishes the algorithms that study weight~$2$
Siegel paramodular cusp forms by working in weight~$4$.
Section~\ref{secTD} describes our tracing down method,
and section~\ref{secHS} describes our use of Hecke spreading.
Section~\ref{secBP} gives a result that certain conditions suffice for
a Borcherds product of low weight and squarefree level to be a Siegel
paramodular cusp form.
Finally, section~\ref{secL249295} describes how we used this result to
construct the weight~$2$ nonlift newforms at levels $N=249$ and~$N=295$.

We thank Fordham University for letting us carry out computations on
its servers.  We thank Reed College for making its computer lab
machines available to us, and especially the second author thanks
B.~Salzberg for helping him use them in parallel.

\section{Background\label{secB}}

We introduce notation and terminology for Siegel paramodular forms.
The degree~$2$ symplectic group $\Spgp(2)$ of $4\times4$ matrices is
defined by the condition $g'Jg=J$, where the prime denotes matrix
transpose and $J$ is the skew form $\smallmat0{-1}1{\phantom{-}0}$
with each block $2\times2$.
The Klingen and Siegel parabolic subgroups of~$\Spgp(2)$ are respectively
$$
\Ptwoone=\lset
\left[\begin{array}{cc|cc}
* & 0 & * & * \\ * & * & * & * \\
\hline
* & 0 & * & * \\ 0 & 0 & 0 & *
\end{array}\right]\rset,
\qquad
\Ptwozero=\lset
\left[\begin{array}{cc|cc}
* & * & * & * \\ * & * & * & * \\
\hline
0 & 0 & * & * \\ 0 & 0 & * & *
\end{array}\right]\rset.
$$
For $\Ptwoone$, the three zeros on the bottom row force the other two
zeros in consequence of the matrices being symplectic.
For any positive integer~$N$, the paramodular group $\KN$ of degree~$2$
and level~$N$ is the group of rational symplectic matrices that
stabilize the column vector lattice $\Z\oplus\Z\oplus\Z\oplus N\Z$.  In
coordinates,
$$
\KN=\lset
\left[\begin{array}{cc|cc}
* & *N & * & * \\ * & * & * & */N \\
\hline
* & *N & * & * \\ *N & *N & *N & *
\end{array}\right]\in\SptwoQ:\text{all $*$ entries integral}
\rset.
$$
Here the upper right entries of the four subblocks are ``more integral
by a factor of~$N$'' than implied immediately by the definition of the
paramodular group as a lattice stabilizer, but as with $\Ptwoone$ the
extra conditions hold because the matrices are symplectic.

Let $\UHPtwo$ denote the Siegel upper half space of $2\times2$
symmetric complex matrices that have positive definite imaginary part.
Elements of this space are notated
$$
\Omega=\cmat\tau zz\omega\in\UHPtwo,
$$
and also, letting $\e(z)=e^{2\pi iz}$ for~$z\in\C$, the notation
$$
q=\e(\tau),\quad\zeta=\e(z),\quad\xi=\e(\omega)
$$
is standard.
The real symplectic group $\SptwoR$ acts on~$\UHPtwo$ via fractional
linear transformations, $g(\Omega)=(a\Omega+b)(c\Omega+d)\inv$ for
$g=\smallmatabcd$, and the factor of automorphy is
$j(g,\Omega)=\det(c\Omega+d)$.  Fix an integer~$k$.
Any function $f:\UHPtwo\lra\C$ and any real symplectic matrix
$g\in\SptwoR$ combine to form another such function through the
weight~$k$ operator, $f\wtk g(\Omega)=j(g,\Omega)^{-k}f(g(\Omega))$.
A Siegel paramodular form of weight~$k$ and level~$N$ is a holomorphic
function $f:\UHPtwo\lra\C$ that is $\wtk{\KN}$-invariant; the K\"ocher
Principle says that for any positive $2\times2$ real matrix~$Y_o$, the
function $f\wtk g$ is bounded on $\lset\Im \Omega>Y_o\rset$ for all
$g\in\SptwoQ$.  The space of weight~$k$, level~$N$ Siegel paramodular
forms is denoted~$\MkKN$.
Dimension formula methods for $\MkKN$ based on the Riemann--Roch
Theorem or trace formulas are not available for~$k=2$.

The Witt map~$\iotast$ takes functions $f:\UHPtwo\lra\C$ to functions
$\iotastf:\UHP\times\UHP\lra\C$, with
$(\iotastf)(\tau\times\omega)=f(\smallmat\tau00\omega)$.
Especially, the Witt map takes $\MkKN$ to
$\MFs\SLtwoZ\otimes\MFs\SLtwoZ\wtk{\smallmat N001}$.
Siegel's $\Phi$ map takes any holomorphic function that has a Fourier
series of the form
$f(\Omega)=\sum_t\fc tf\,\e(\ip t\Omega)$, summing over matrices
$t=\smallmat nrrm$ with $n,m\in\Qnn$, $r\in\Q$, and $nm-r^2\ge0$, to
the function $(\Phi f)(\tau)=\lim_{\omega\to i\infty}(\iotastf)(\tau,\omega)$.
A Siegel paramodular form $f$ in~$\MkKN$ is a cusp form if $\Phi(f\wtk g)=0$
for all~$g\in\SptwoQ$; the space of such forms is denoted~$\SkKN$.
The dimension of $\SkKN$ is known for squarefree~$N$ and~$k\ge3$
\cite{i85,i07,ik}.

Every Siegel paramodular form of weight~$k$ and level~$N$ has a
Fourier expansion
$$
f(\Omega)=\sum_{t\in\XtwoNsemi}\fc tf\,\e(\ip t\Omega)
$$
where
$\XtwoNsemi=\lset\smallmat n{r/2}{r/2}{mN}:n,m\in\Znn,r\in\Z,4nmN-r^2\ge0\rset$
and $\ip t\Omega=\tr(t\Omega)$.  A Siegel paramodular form is a cusp
form if and only if its Fourier expansion is supported on $\XtwoN$,
defined by the strict inequality $4nmN-r^2>0$; this description of
cusp forms does not hold in general for groups commensurable
with~$\SptwoZ$, but it does hold for~$\KN$.
Consider any $\SptwoR$ matrix of the form $g=d^*\matoplus d=\smallmat{d^*}00d$
with $d\in\GLtwoR$, where the superscript asterisk denotes matrix
inverse-transpose.  Introduce the notation $t[u]=u'tu$ for compatibly
sized matrices $t$ and~$u$.  Then we have $f\wtk g(\Omega)
=(\det d)^{-k}\sum_{t\in\XtwoNsemi[d^*]}\fc{t[d']}f\,\e(\ip t\Omega)$
for any Siegel paramodular form~$f$, and especially if $g$
normalizes~$\KN$ so that $f\wtk g$ is again a Siegel paramodular form
then $\fc t{f\wtk g}=(\det d)^{-k}\fc{t[d']}f$ for~$t\in\XtwoNsemi$.
Let $\GampmsupzeroN$ denote the subgroup of~$\GLtwoZ$ defined by the
condition $b=0\mymod N$.  For $d\in\GampmsupzeroN$, the matrix
$g=d\inv\matoplus d'$ lies in~$\KN$ and we get
$\fc{t[d]}f=(\det d)^k\fc tf$ for~$t\in\XtwoNsemi$.
Our programs handle Fourier coefficient indices at the level
of $\GampmsupzeroN$-equivalence classes, each class having a
canonical representative of the form $t_o\times v_o$ with
$t_o\in\Xtwosemi$ Legendre reduced and with $v_o\in\Z/N\Z\oplus\Z/N\Z$.


For each positive divisor~$c$ of~$N$ such that $\gcd(c,N/c)=1$, let
$\hat c$ be a multiplicative inverse of~$N/c$ modulo~$c$.  Introduce
the $c$-th cusp representative matrix~$r_c$ and a translation
matrix~$\beta_c$ and their product, the $c$-th Atkin--Lehner matrix
$\alpha_c=r_c\beta_c$,
$$
r_c=\cmat10{N/c}1,
\quad
\beta_c=\frac1{\sqrt c}\cmat c{-\hat c}0{\phantom{-}1},
\quad
\alpha_c=\frac1{\sqrt c}\cmat c{-\hat c}N{1-(N/c)\hat c}.
$$
The Atkin--Lehner matrix normalizes the level~$N$ Hecke subgroup $\GzN$
of~$\SLtwoZ$, and it squares into~$\GzN$; these properties hold for
all of $\GzN\alpha_c\GzN$, any of which may be taken as~$\alpha_c$ instead.
For $c=1$ we take $r_1=\beta_1=\alpha_1=1_2$.
For $c=N$ we modify the Atkin--Lehner matrix, multiplying it from the
right by~$\smallmat10N1$ to get the traditional Fricke involution,
$\alpha_N=\tfrac1{\sqrt N}\smallmat0{-1}N{\phantom{-}0}:
\tau\mapsto-\tfrac1{N\tau}$.
Each Atkin--Lehner matrix $\alpha_c$ has a corresponding paramodular
Atkin--Lehner matrix $\mu_c=\alpha_c^*\matoplus\alpha_c$ that
normalizes~$\KN$ and squares into~$\KN$.
For $c=1$ we take $\mu_1=1_4$.
For $c=N$, the paramodular Fricke involution is
$\mu_N=\tfrac1{\sqrt N}(\smallmat0{-N}10\matoplus\smallmat0{-1}N0):
\smallpvar\longmapsto\smallmat{\omega N}{-z}{-z}{\tau/N}$.
The space $\CFs\KN$ decomposes as the direct sum of the Fricke eigenspaces
for the two eigenvalues~$\pm1$, $\SkKN=\SkKN^+\oplus\SkKN^-$.
More generally the Atkin--Lehner involutions satisfy
$\wtk{\alpha_c}\wtk{\alpha_{\tilde c}}=\wtk{\alpha_{c\tilde c}}$ for
coprime divisors $c$ and~$\tilde c$ of~$N$, and so they commute.
Thus $\SkKN$ decomposes as a direct sum of spaces $\SkKN^v$ where
$v$ is a vector of $\pm$ entries indexed by the prime divisors of the
level~$N$.  Such a vector is called an Atkin--Lehner signature.

The Fourier--Jacobi expansion of a Siegel paramodular cusp form
$f\in\SkKN$ is
$$
f(\Omega)=\sum_{m\ge1}\phi_m(f)(\tau,z)\xi^{mN},
\quad \Omega=\pvar,\ \xi=\e(\omega)
$$
with Fourier--Jacobi coefficients
$$
\phi_m(f)(\tau,z)
=\sum_{t=\smallmat n{r/2}{r/2}{mN}\in\XtwoN}\fc tfq^n\zeta^r,
\quad q=\e(\tau),\ \zeta=\e(z).
$$
Here the coefficient $\fc tf$ is also written $\fcJ{n,r}{\phi_m}$.
Each Fourier--Jacobi coefficient $\phi_m(f)$ lies in the space
$\JkNcusp k{mN}$ of weight~$k$, index~$mN$ Jacobi cusp forms,
whose dimension is known (for the theory of Jacobi forms, see
\cite{ez85,gn98,sz89}).
These are Jacobi forms of level one---this is an advantage of the
paramodular group over the Hecke subgroup $\Gamma_0^{(2)}(N)$
of~$\SptwoZ$---and trivial character, both omitted from the notation.
The additive (Gritsenko) lift $\Grit:\JkNcusp kN\lra\SkKN^\epsilon\subset\SkKN$
for $\epsilon=(-1)^k$ is a section of the map $\SkKN\lra\JkNcusp kN$
that takes each~$f$ to~$\phi_1(f)$, \ie, $\phi_1({\rm Grit}(\phi))=\phi$
for all $\phi\in\JkNcusp kN$.

Jacobi restriction is described briefly in section~5 of \cite{bpy16},
and we sketch it here as well.  Taking an even weight~$k$ for
simplicity, the coefficients of a Siegel paramodular Fricke eigenform
$f\in\SkKN^\epsilon$ satisfy the Siegel consistency relations
$\fcJ{n,r}{\phi_m}=\fcJ{n',r'}{\phi_{m'}}$ for
$\smallmat{n'}{r'/2}{r'/2}{m'N}\in\smallmat n{r/2}{r/2}{mN}[\GampmsupzeroN]$,
and they satisfy the Fricke eigenform relations
$\fcJ{n,r}{\phi_m}=\epsilon\fcJ{m,-r}{\phi_n}$.
For a chosen value $\mmax$, define a subspace $V(\mmax)$ of
$\bigoplus_{m=1}^\mmax\JkmNcusp$ by imposing any subset of the just-mentioned
linear relations on the coefficients $\fcJ{n,r}{\phi_m}$ with $m\le\mmax$.
Thus $V(\mmax)$ contains the image of the map
$\SkKN^\epsilon\lra\bigoplus_{m=1}^{m_{\rm max}}\JkNcusp k{mN}$ that
takes each~$f$ to $(\phi_1(f),\dotsc,\phi_{m_{\rm max}}(f))$.
In particular, $\dim \SkKN^\epsilon\le\dim V(\mmax)$
for $\mmax$ large enough to make the map inject; theoretical estimates
for $\mmax$ in \cite{bpy16} guarantee injectivity, but they can be too
big for practical use.
For values of $k$ and improved values of~$\mmax$ relevant to this
article, we can span the spaces $\JkmNcusp$ for $m\le\mmax$ with theta
blocks, and so we can compute $\dim V(\mmax)$.  
For any prime~$q$, if we have bases of the Jacobi form spaces over
the field $\F_q$ of $q$ elements then the same computation modulo~$q$
gives the bound $\dim\SkKN^\epsilon\le\dim_{\F_q}V(\mmax)$.
Jacobi restriction is remarkably tractable, and it often gives
optimal dimension upper bounds for values of $\mmax$ much smaller than
the theoretical estimates.
Even when we don't know that $\mmax$ is large enough to guarantee the
bounds given by Jacobi restriction, those bounds are still very useful
{\em heuristic} estimates.  For example, our weight~$4$ computations
that made up the bulk of the project being described in this article
were not viable until we used such estimates from Jacobi restriction to
decide how many Fourier coefficients the computations should track,
and also our work in weight~$4$ at a given level often involved a
confident decision between searching for more Fricke plus space 
dimensions or more Fricke minus space dimensions based on the
heuristic dimensions of the two eigenspaces.


The Dedekind eta function and the odd Jacobi theta function are
\begin{align*}
\eta(\tau)&=q^{1/24}\prod_{n\in\Zpos}(1-q^n),\\
\vartheta(\tau,z)
&=\sum_{n\in\Z}(-1)^nq^{(n+1/2)^2/2}\zeta^{n+1/2}.
\end{align*}
Let $\vartheta_r(\tau,z)=\vartheta(\tau,rz)$ for any $r\in\Zpos$.
Quotients $\vartheta_r/\eta$ are the basic ingredients of the theta
block ``without denominator'' (see \cite{gsz}) associated to any
finitely supported function $\varphi:\Znn\lra\Z$ with $\varphi(r)\ge0$
for~$r\ge1$,
$$
\TB(\varphi)(\tau,z)=\eta(\tau)^{\varphi(0)}
\prod_{r\in\Zpos}(\vartheta_r(\tau,z)/\eta(\tau))^{\varphi(r)}.
$$
Any such theta block transforms as a Jacobi form of
weight~$k=\tfrac12\varphi(0)$ and of index
$m=\tfrac12\sum_{r\in\Zpos}r^2\varphi(r)$, and when
$\tfrac1{24}\varphi(0)+\tfrac1{12}\sum_{r\in\Zpos}\varphi(r)\in\Z$ 
it has trivial character.
The theta block $\TB(\varphi)$ needn't be a Jacobi cusp form, but the
``without denominator'' stipulation that $\varphi(r)\ge0$ for~$r\ge1$
makes it lie in the space $\JkNwh km$ of weakly holomorphic
weight~$k$, index~$m$ Jacobi forms, whose Fourier expansions
$\psi(\tau,z)=\sum_{n,r}c(n,r)q^n\zeta^r$
are supported on~$n\gg-\infty$.  We show that equivalently, the
support can be taken to be $4nm-r^2\gg-\infty$.
The index~$m$ Jacobi form transformation
law $\psi(\tau,\lambda\tau+z)q^{\lambda^2m}\zeta^{2\lambda m}=\psi(\tau,z)$
for any $\lambda\in\Z$ shows that
$c(n-\lambda r+\lambda^2m,r-2\lambda m)=c(n,r)$
for all $(n,r)$ and $\lambda$, and also
$4(n-\lambda r+\lambda^2m)m-(r-2\lambda m)^2=4nm-r^2$.
Thus, for a given value of $4nm-r^2$ we may consider only coefficients
$c(n,r)$ with $|r|\le m$.
If for some~$n_o$, all coefficients $c(n,r)$ where $n<n_0$ are~$0$,
then all coefficients $c(n,r)$ where $4nm-r^2<4n_om-m^2$ are~$0$;
indeed, we may take $|r|\le m$, giving $4nm-m^2\le4nm-r^2<4n_om-m^2$
and thus $n<n_o$, so $c(n,r)=0$ as claimed.
Conversely, if for some~$d_0$, all coefficients $c(n,r)$ where
$4nm-r^2<d_o$ are~$0$, then also $c(n,r)=0$ for all $n<d_o/(4m)$.
Thus the weight~$k$, index~$m$ weakly holomorphic Jacobi forms
can be defined by the condition $c(n,r)=0$ either for $n\gg-\infty$ or
for $4nm-r^2\gg-\infty$, as claimed.
Furthermore, given a weakly holomorphic Jacobi form of index~$m$,
its coefficients $c(n,r)$ where $4nm-r^2\le0$ are entirely determined
by the finitely many such coefficients indexed by $(n,r)$ such that
$n\le m/4$ and $|r|\le m$.  This holds because
$c(n,r)=c(\tilde n,\tilde r)$ for some $(\tilde n,\tilde r)$ with
$|\tilde r|\le m$ and $4\tilde nm-\tilde r^2=4nm-r^2$; thus
$4\tilde nm-m^2\le4\tilde nm-\tilde r^2=4nm-r^2\le0$, and the
claimed inequality $\tilde n\le m/4$ follows.

Some functions $\varphi:\Znn\lra\Z$ that don't take $\Zpos$ to~$\Znn$
still produce weakly holomorphic Jacobi forms under the formula in the
previous display.  These are theta blocks ``with denominator.''  Our
algorithm and program to find Borcherds products to help span spaces
$\SfourKN$ involved theta blocks without denominator, while our
construction of the nonlifts in $\StwoKN$ for $N=249,295$ used theta
blocks with denominator.

\section{Cuspidality for Low Or Odd Weight and Squarefree Level\label{secC}}

Our computation used the following cuspidality test.  Specifically,
the test will be used in the proof of Corollary~\ref{BPthmcor}, which
identifies some Borcherds products as paramodular cusp forms.

\begin{proposition}\label{cuspidalityprop}
Let $N$ be a squarefree positive integer, and let $k$ be a positive integer.
If $k=2$ or $k$ is odd then $\MtwoKN=\StwoKN$.
If $k=4,6,8,10,14$ then for all $f\in\MkKN$, $f\in\SkKN$ if and only
if $\fc0f=0$.
\end{proposition}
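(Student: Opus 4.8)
The plan is to reduce cuspidality to the vanishing of the Siegel $\Phi$-map on the paramodular form and all its Atkin--Lehner translates, and then to control those translates using the structure of the boundary. Recall that $f\in\MkKN$ is a cusp form exactly when $\Phi(f\wtk g)=0$ for all $g\in\SptwoQ$. Since $\KN$ has finitely many cusps (the zero-dimensional boundary components), indexed by the divisors $c\mid N$ with $\gcd(c,N/c)=1$ via the Atkin--Lehner/cusp matrices $\alpha_c$ introduced in section~\ref{secB}, it suffices to check $\Phi(f\wtk{\gamma_c})=0$ for representatives $\gamma_c$ at each cusp; equivalently, each $\Phi(f\wtk{\gamma_c})$ is an elliptic modular form of weight~$k$ on some congruence subgroup (a conjugate of $\SLtwoZ$ or $\Gamzero$-type group), and we must show it vanishes. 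The constant term $\fc0f$ of~$f$ controls the value at the cusp corresponding to $c=1$: $\Phi f$ is, up to the Witt/restriction picture, $\sum_{n}\fc{\smallmat n000}{f}q^n$, so $\fc0f=0$ forces $\Phi f=0$ as a weight-$k$ form on $\SLtwoZ$ only if all the other diagonal coefficients vanish too.

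First I would make precise the relation between the single number $\fc 0f$ and the one-dimensional boundary: a weight-$k$ Siegel paramodular form restricts along $\Phi$ to a weight-$k$ form on $\SLtwoZ$, hence $\Phi f$ is a scalar multiple of a fixed basis of $\MFs\SLtwoZ$. For $k=4,6,8,10,14$ the space $\MFs\SLtwoZ$ is one-dimensional (spanned by $E_4,E_6,E_4E_6,E_4^2E_6,E_4^2E_6^2$ respectively — precisely the weights where $\dim\MFs\SLtwoZ=1$ and there is no cusp form), and the constant term of that generator is nonzero; therefore $\Phi f = \fc0f\cdot(\text{normalized Eisenstein generator})$, and $\fc 0f=0$ forces $\Phi f=0$. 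The same argument applies at every other cusp: $\Phi(f\wtk{\alpha_c})$ again lies in a one-dimensional space of weight-$k$ Eisenstein series (for squarefree $N$ each cusp of $\KN$ has the "full" stabilizer so the relevant elliptic space is still essentially $\MFs\SLtwoZ$ or a twist with the same one-dimensionality), and its constant term is, up to a nonzero scalar coming from $(\det\alpha_c)^{-k}$ and the transformation $\fc t{f\wtk{\alpha_c}}=(\det d)^{-k}\fc{t[d']}f$ with the relevant $\GampmsupzeroN$-class of the zero index, again a nonzero multiple of $\fc 0f$. Hence $\fc 0f=0$ kills every $\Phi(f\wtk g)$, so $f\in\SkKN$; the converse is immediate since a cusp form has vanishing constant term.

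For the first assertion, when $k=2$ or $k$ is odd, I would argue that $\Phi f$ and all $\Phi(f\wtk{\alpha_c})$ lie in spaces that are \emph{zero}: for $k$ odd, $\MFs\SLtwoZ=0$ (odd-weight forms on $\SLtwoZ$ vanish because $-1_2$ acts by $(-1)^k=-1$), and the analogous vanishing holds at each paramodular cusp for squarefree $N$; for $k=2$, $\MFs\SLtwoZ=0$ as well (no nonzero weight-$2$ level-one modular forms). Thus every boundary restriction is automatically zero with no hypothesis on $\fc0f$, giving $\MkKN=\SkKN$ outright. The point that needs care — and this is the main obstacle — is justifying that for each cusp of $\KN$ the one-dimensional Klingen-type boundary component really does see an \emph{elliptic} modular form on a group for which the weight-$k$ space has the stated dimension ($0$ for $k=2$ or $k$ odd; $1$ with nonvanishing constant term for $k=4,6,8,10,14$), and that no extra character or smaller congruence subgroup sneaks in to spoil this. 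Here squarefreeness of $N$ is essential: it guarantees that the cusps of $\KN$ are parametrized cleanly by the $c\mid N$ with $\gcd(c,N/c)=1$ and that the Atkin--Lehner matrices $\alpha_c$ conjugate the relevant parabolic stabilizer to a group whose weight-$k$ modular forms form a space of the claimed size — I would cite the Fourier-coefficient transformation law $\fc{t[d]}f=(\det d)^k\fc tf$ for $d\in\GampmsupzeroN$ from section~\ref{secB} together with the explicit shape of $\alpha_c$ to pin down the scalar, and fall back on the known dimensions $\dim\MFs\SLtwoZ$ to close the argument.
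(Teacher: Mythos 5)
Your overall strategy coincides with the paper's: reduce cuspidality to finitely many cusps of $\KN$, indexed for squarefree $N$ by the divisors $c\mid N$ and represented by Atkin--Lehner elements, and then exploit that $\MFs{\SLtwoZ}$ is zero for $k=2$ or $k$ odd and is $\C E_k$ (with nonzero constant term) for $k=4,6,8,10,14$, the constant term of each translate being $\fc0f$. However, the step you yourself flag as ``the main obstacle'' --- that the boundary restriction at \emph{every} cusp is a weight-$k$ form on the full modular group, with no smaller congruence subgroup or character intervening --- is precisely the substance of the proof, and your proposal never establishes it. Citing the coefficient transformation law and ``the explicit shape of $\alpha_c$'' does not by itself rule out that $\Phi(f\wtk{\gamma})$, for a general representative $\gamma\in\SptwoQ$ of a cusp, is modular only for a group of $\Gamzero(M)$-type, where the weight-$2$ space is nonzero and the weight-$4$ space has dimension greater than one, in which case your dimension-count argument collapses. (Also a terminological slip: the cusps relevant to the Siegel $\Phi$-operator are the one-dimensional Klingen boundary components, not the zero-dimensional ones; for squarefree $N$ the paramodular group has a single $0$-cusp.)

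The paper closes exactly this gap with two concrete facts. First, Reefschl\"ager's decomposition (applicable because squarefreeness makes every divisor of $N$ satisfy the Atkin--Lehner condition) gives $\SptwoQ=\bigsqcup_{0<c\mid N}\KN\,\mu_c(\beta_c'\matoplus\beta_c\inv)\,\Ptwoone(\Q)$, and a computation shows that for $u$ in the Klingen parabolic, $\Phi(f\wtk{\mu_c}\wtk{u})$ equals a nonzero constant times $\Phi(f\wtk{\mu_c})$ slashed by the upper-left $2\times2$ block of $u$; hence only the translates $f\wtk{\mu_c}$ need be tested --- you assert the reduction to cusp representatives but never verify this compatibility. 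Second, since $\mu_c$ normalizes $\KN$, each $f\wtk{\mu_c}$ lies again in $\MkKN$, so its Witt image lies in $\MFs{\SLtwoZ}\otimes\MFs{\SLtwoZ}\wtk{\smallmat N001}$, which is zero for $k=2$ or $k$ odd and is $\C\,E_k(\tau)E_k\wtk{\smallmat N001}(\omega)$ for $k=4,6,8,10,14$; combined with $\fc0{f\wtk{\mu_c}}=\fc0f$ (since $0[\alpha_c']=0$ and $\det\alpha_c=1$), this is the clean statement you were reaching for. So your skeleton is right, but the argument as written is incomplete at its central point; supplying the normalization of $\KN$ by $\mu_c$, the Reefschl\"ager decomposition, and the $\Phi$-equivariance under the Klingen parabolic is what turns it into a proof.
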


\begin{proof}
Recall the matrices $r_c$, $\alpha_c$, $\beta_c$,
and $\mu_c=\alpha_c^*\matoplus\alpha_c$ from section~\ref{secB}.
For any squarefree positive integer~$N$, every divisor~$c$ of~$N$
satisfies the Atkin--Lehner condition $\gcd(c,N/c)=1$, and so
H.~Reefschl\"ager's decomposition (\cite{reefschlager73}, and see
Theorem~1.2 of \cite{py13})
$\SptwoQ=\bigsqcup_{0<c\mid N}\KN(r_c^*\matoplus r_c)\Ptwoone(\Q)$
combines with the relations $\alpha_c=r_c\beta_c$ to give
$\SptwoQ=\bigsqcup_{0<c\mid N}\KN\mu_c(\beta_c'\matoplus\beta_c\inv)
\Ptwoone(\Q)$.
Thus any $g\in\SptwoQ$ is $g=\kappa\mu_cu_c$ where $\kappa\in\KN$
and $0<c\mid N$ and $u_c\in\Ptwoone(\Q(\sqrt{N/c}))$,
and consequently $\Phi(f\wtk g)=\Phi(f\wtk{\mu_c}\wtk{u_c})$.
Let $u_{1,c}\in\GLtwoR$ denote the $2\times2$ matrix of upper left
entries of the four blocks of~$u_c$.  For any $f\in\MFs\KN$, a
computation shows that
$\Phi(f\wtk{\mu_c}\wtk{u_c})=(m\,\Phi(f\wtk{\mu_c}))\wtk{u_{1,c}}$,
where $m$ is a nonzero constant that depends on~$u_c$.
Thus, to show that $f$ is a cusp form, it suffices to show that
$\Phi(f\wtk{\mu_c})=0$ for all $0<c\mid N$.
Note that to do so, we need to consider the Siegel $\Phi$ map only
on~$\MFs\KN$.  Recall that the Witt map on~$\MFs\KN$ has codomain
$\MFs\SLtwoZ\otimes\MFs\SLtwoZ\wtk{\smallmat N001}$.

If $k=2$ or $k$ is odd then 
$\MFs\SLtwoZ\otimes\MFs\SLtwoZ\wtk{\smallmat N001}=0$,
making the Witt map on~$\MFs\KN$ zero and hence making
the Siegel $\Phi$ map on~$\MFs\KN$ zero.  This proves the first statement.

If $k=4,6,8,10,14$ then
$\MFs\SLtwoZ\otimes\MFs\SLtwoZ\wtk{\smallmat N001}=\C\varphi$ with
$\varphi(\tau,\omega)=E_k(\tau)E_k\wtk{\smallmat N001}(\omega)$.
The Witt map image of any $f$ in $\MkKN$ has the same constant term as~$f$.
Thus, letting $c_o$ denote the constant term of~$E_k\wtk{\smallmat N001}$,
the Witt map is $f\mapsto(\fc0f/c_o)\varphi$
and Siegel's $\Phi$ map is $f\mapsto\fc0fE_k$.
To prove the nontrivial part of the second statement, we take any
$f\in\MkKN$ with $\fc0f=0$ and show that $\Phi(f\wtk{\mu_c})=0$
for~$0<c\mid N$, making $f$ a cusp form.  The Fourier coefficients of
$f\wtk{\mu_c}$ are $\fc t{f\wtk{\mu_c}}=\fc{t[\alpha_c']}f$, so in
particular $\fc0{f\wtk{\mu_c}}=\fc0f=0$.  Thus
$\Phi(f\wtk{\mu_c})=\fc0{f\wtk{\mu_c}}E_k=0$, as desired.
\end{proof}

\section{Analyzing Weight~$2$ Via Weight~$4$\label{secAW2W4}}

Let $N$ be a positive integer.
This section presents four tests to study $\StwoKN$ based on
computations in $\SfourKN$.  A main point is that the tests can
certify that the results of Jacobi restriction are rigorous even when
the restriction is carried out only to a few terms.
We begin by introducing subspaces of~$\StwoKN$ whose vanishing
connotes the correctness of Jacobi restriction.

\begin{definition}
For any~$d\in\Zpos$ define
\begin{align*}
\StwoKN(d)
&=\lset f\in\StwoKN:f(\Omega)=\sum_{m\ge d}\phi_m(f)(\tau,z)\xi^{mN}\rset.
\end{align*}
Also define
$$
\StwoKN^\epsilon(d)=\StwoKN(d)\cap\StwoKN^\epsilon,\quad\epsilon=\pm1.
$$
\end{definition}

We say that elements of a space~$\StwoKN(d)$ are {\em $d$-docked\/},
because their Fourier--Jacobi coefficients before $\phi_d$ vanish; in
particular, {\em $1$-docked\/} connotes no conditions and {\em $2$-docked\/}
means that $\phi_1(f)=0$.  Any~$f\in\StwoKN(d)$ has Fourier
coefficients $\fc tf=0$ for all $t=\smallmat n{r/2}{r/2}{mN}\in\XtwoN$
such that $m<d$, and so it has Fourier coefficients
$\fc{t[\gamma]}f=0$ for all such~$t$ and for all
$\gamma\in\GampmsupzeroN$ because its Fourier coefficients are
$\GampmsupzeroN$ class functions, as discussed in section~\ref{secB}.
If $\StwoKN(d)=0$, so that $\StwoKN^\pm(d)=0$,
then running Jacobi restriction out to $d-1$ terms or more produces
rigorous upper bounds of~$\dim\StwoKN^\pm$.
If $\StwoKN^+(d)=0$ then running Jacobi restriction out to $d-1$ terms
or more produces a rigorous upper bound of~$\dim\StwoKN^+$, and
similarly with ``$-$'' in place of~''$+$''.  Note, however, that the
conditions $\StwoKN^\pm(d)=0$ need not imply $\StwoKN(d)=0$ for~$d\ge2$.
Note also that $\StwoKN(d)\cap\GritJtwoNcusp=0$ for~$d\ge2$.
We make two observations to be used in the analysis.

\begin{lemma}\label{H4Nlemma1}
{\rm(a) }Let $f\in\StwoKN$ be nonzero, and let $\lset g_i:i\in I\rset$
be a basis of~$\GritJtwoNcusp$.  The set $\lset fg_i:i\in I\rset$ is
linearly independent in~$\SfourKN$, and if $f$ is a nonlift then the
set $\lset f^2,fg_i:i\in I\rset$ is linearly independent as well.

{\rm(b) }We have the equivalence
$$
\StwoKN^+=\GritJtwoNcusp\iff\StwoKN^+(2)=0,
$$
and the same equivalence holds with $\StwoKN$ in place of~$\StwoKN^+$.
\end{lemma}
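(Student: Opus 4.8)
The plan is to prove part~(a) first and then derive part~(b) as a consequence. For part~(a), the key idea is that a nonzero Siegel paramodular cusp form of weight~$2$ cannot satisfy a nontrivial polynomial relation of degree~$\le 2$ with the lift forms once we track Fourier--Jacobi expansions. Write $f\in\StwoKN$ nonzero, and let $m_0\ge1$ be the least index with $\phi_{m_0}(f)\ne0$, so the Fourier--Jacobi expansion of~$f$ starts at~$\xi^{m_0N}$. Each lift $g_i=\Grit(\phi_i)$ has $\phi_1(g_i)=\phi_i$, so its expansion starts at~$\xi^{N}$; hence $fg_i$ has expansion starting at~$\xi^{(m_0+1)N}$, with leading Fourier--Jacobi coefficient $\phi_{m_0}(f)\cdot\phi_1(g_i)=\phi_{m_0}(f)\,\phi_i$. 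A linear relation $\sum_i c_i\,fg_i=0$ would force $\phi_{m_0}(f)\sum_i c_i\phi_i=0$ in the space of Jacobi forms; since Jacobi forms of index~$m_0N$ and~$N$ have no zero divisors (the product of two nonzero Jacobi forms is a nonzero Jacobi form, e.g.\ by looking at the lowest nonvanishing $q$-coefficient of each), and $\phi_{m_0}(f)\ne0$, we conclude $\sum_i c_i\phi_i=0$, hence all $c_i=0$ by linear independence of the $g_i$ (equivalently the~$\phi_i$). This gives the first assertion.

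For the second assertion of~(a), suppose $f$ is a nonlift, meaning $f\notin\GritJtwoNcusp$, and consider a relation $c\,f^2+\sum_i c_i\,fg_i=0$ in~$\SfourKN$. If $c=0$ we are reduced to the previous case, so assume $c\ne0$ and divide to get $f^2=\sum_i d_i\,fg_i$ with $d_i=-c_i/c$. Since $\SkKN$ has no zero divisors (the same Fourier--Jacobi leading-coefficient argument applies: a product of nonzero paramodular cusp forms is nonzero), we may cancel the common factor~$f$ to obtain $f=\sum_i d_i\,g_i\in\GritJtwoNcusp$, contradicting that $f$ is a nonlift. Hence $c=0$, and then all $c_i=0$ as well, proving linear independence of $\lset f^2,fg_i:i\in I\rset$. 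The only mild subtlety here is the cancellation step, which requires knowing that $\SkKN$ is an integral domain under multiplication for the relevant weights; this again follows from the fact that the lowest nonvanishing Fourier--Jacobi coefficient of a product is the product of the lowest nonvanishing Fourier--Jacobi coefficients, together with the corresponding integral-domain property of Jacobi cusp form spaces.

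For part~(b), the implication $\StwoKN^+=\GritJtwoNcusp\implies\StwoKN^+(2)=0$ is immediate, since $\GritJtwoNcusp\subseteq\StwoKN^+$ (lifts of weight-$2$ Jacobi forms lie in the Fricke $(+1)$-eigenspace as $(-1)^2=1$) while $\StwoKN(2)\cap\GritJtwoNcusp=0$ as noted in the text, so $\StwoKN^+(2)=\StwoKN^+(2)\cap\GritJtwoNcusp=0$. Conversely, suppose $\StwoKN^+(2)=0$, and let $f\in\StwoKN^+$ be arbitrary. Its first Fourier--Jacobi coefficient $\phi_1(f)$ lies in $\JtwoNcusp$, so $\Grit(\phi_1(f))\in\GritJtwoNcusp\subseteq\StwoKN^+$ and $\phi_1(\Grit(\phi_1(f)))=\phi_1(f)$ because $\Grit$ is a section of $f\mapsto\phi_1(f)$. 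Therefore $f-\Grit(\phi_1(f))\in\StwoKN^+$ has vanishing first Fourier--Jacobi coefficient, i.e.\ it lies in $\StwoKN^+(2)=0$, so $f=\Grit(\phi_1(f))\in\GritJtwoNcusp$. This shows $\StwoKN^+\subseteq\GritJtwoNcusp$, hence equality. The argument with $\StwoKN$ in place of $\StwoKN^+$ is verbatim the same, using only that $\GritJtwoNcusp\subseteq\StwoKN$ and that $\Grit$ is a section of the first-Fourier--Jacobi-coefficient map; no Fricke eigenspace decomposition is needed. I expect the main point requiring care is simply stating cleanly the no-zero-divisor property for paramodular and Jacobi cusp form spaces that underlies part~(a); everything else is bookkeeping with Fourier--Jacobi expansions.
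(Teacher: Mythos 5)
Your proof is correct and follows essentially the same route as the paper: part~(a) rests on the absence of zero divisors in the (graded) ring of paramodular forms, which you justify via leading Fourier--Jacobi coefficients and the integral-domain property of Jacobi forms, and part~(b) is exactly the paper's argument of subtracting $\Grit(\phi_1(f))$ to land in $\StwoKN^{+}(2)$ (respectively $\StwoKN(2)$), using that $\Grit$ is a section of $\phi_1$ and that weight-$2$ lifts lie in the Fricke plus space. No gaps.
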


\begin{proof}
(a) Because there is no nontrivial linear relation among
$\lset g_i:i\in I\rset$, there is no such relation among
$\lset fg_i:i\in I\rset$ either, because the graded ring of paramodular
forms has no zero divisors.  The same argument shows that if
$f$ is a nonlift then there is no nontrivial linear relation among
$\lset f^2,fg_i:i\in I\rset$.

(b) A nonzero element of $\StwoKN^+(2)$ is an element of~$\StwoKN^+$
but not of~$\GritJtwoNcusp$, and an element of~$\StwoKN^+$ that is not
an element of~$\GritJtwoNcusp$ can be translated by an element
of~$\GritJtwoNcusp$ to produce a nonzero element of $\StwoKN^+(2)$.
The argument for $\StwoKN$ is the same.
\end{proof}

To study $\StwoKN(d)$ and $\StwoKN^\epsilon(d)$ for $\epsilon=\pm1$,
we introduce subspaces of $\SfourKN$ and $\SfourKN^\epsilon$ that are
generated by products of $\StwoKN$-elements subject to docking and
Fricke eigenspace conditions.

\begin{definition}\label{H4Ndef}
For any $d\in\Zpos$, define the following subspaces of~$\SfourKN$.
\begin{align*}
H_4(N,d,d)^+&=
\left\langle f_1f_2:f_1,f_2\in\StwoKN^\epsilon(d)
\text{ for one of $\epsilon=\pm1$}\right\rangle,\\
H_4(N,d,1)^{\phantom{+}}
&=\left\langle f_1f_2:f_1\in\StwoKN(d),\ f_2\in\StwoKN\right\rangle,\\
H_4(N,d,1)^+&=
\left\langle
f_1f_2:f_1\in\StwoKN^\epsilon(d),\ f_2\in\StwoKN^\epsilon
\text{ for one of $\epsilon=\pm1$}\right\rangle,\\
H_4(N,d,1)^-&=
\left\langle f_1f_2:
f_1\in\StwoKN^\epsilon(d),\ f_2\in\StwoKN^{-\epsilon}
\text{ for one of $\epsilon=\pm1$}
\right\rangle.
\end{align*}
\end{definition}

Dimension bounds of the weight~$4$ spaces $H_4(N)$ combine with
Lemma~\ref{H4Nlemma1}(a) to give information about the docked
weight~$2$ spaces as follows.

\begin{lemma}\label{H4Nlemma2}
Let $d$ be a positive integer.
\begin{enumerate}
\item If $H_4(N,d,d)^+=0$ then $\StwoKN^\pm(d)=0$.
\item If $\dim H_4(N,d,1)<\dim\JtwoNcusp+1$ then
  $\StwoKN(d)\subset\GritJtwoNcusp$.
\item If $\dim H_4(N,d,1)^+<\dim\JtwoNcusp+1$ then
  $\StwoKN^+(d)\subset\GritJtwoNcusp$.
\item If $\dim H_4(N,d,1)^-<\dim\JtwoNcusp$ then $\StwoKN^-(d)=0$.
\end{enumerate}
\end{lemma}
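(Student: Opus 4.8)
The plan is to prove each of the four statements by the same contrapositive argument, using Lemma~\ref{H4Nlemma1}(a) to turn a nonzero docked weight~$2$ form into a large linearly independent subset of the relevant weight~$4$ space. For statement~(1), suppose $\StwoKN^\pm(d)\ne0$; after replacing $f$ by a suitable Fricke eigenform component (using the decomposition $\StwoKN(d)=\StwoKN^+(d)\oplus\StwoKN^-(d)$) we may assume $0\ne f\in\StwoKN^\epsilon(d)$ for one sign~$\epsilon$. Then $f^2\in H_4(N,d,d)^+$ and $f^2\ne0$ because the graded ring of paramodular forms is an integral domain, so $H_4(N,d,d)^+\ne0$. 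This gives the contrapositive.

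For statements~(2), (3), and~(4) I would argue uniformly. Fix a basis $\{g_i:i\in I\}$ of $\GritJtwoNcusp$, so $|I|=\dim\JtwoNcusp$, and suppose the relevant docked space is not contained in $\GritJtwoNcusp$ (for~(4), not equal to~$0$; but since $\StwoKN(d)\cap\GritJtwoNcusp=0$ for $d\ge2$, and for $d=1$ the inequality $\dim H_4(N,1,1)^-<\dim\JtwoNcusp$ forces the hypothesis to be about a space that still works out—here one should check the $d=1$ edge case separately, or simply note that the interesting applications have $d\ge2$). Pick $f$ in the docked space with $f\notin\GritJtwoNcusp$, hence $f$ is a nonlift; for~(3) choose $f\in\StwoKN^+(d)$ of pure Fricke sign, for~(4) choose $f\in\StwoKN^\epsilon(d)$ of pure sign. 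By Lemma~\ref{H4Nlemma1}(a) the set $\{f^2,\,fg_i:i\in I\}$ is linearly independent in $\SfourKN$, of size $\dim\JtwoNcusp+1$. Now I would check that all these products land in the appropriate $H_4$ space: since $f\in\StwoKN(d)$ and $g_i,f\in\StwoKN$, the products $fg_i$ and $f^2$ lie in $H_4(N,d,1)$, giving $\dim H_4(N,d,1)\ge\dim\JtwoNcusp+1$ and proving~(2) by contraposition. For~(3), each $g_i\in\GritJtwoNcusp\subset\StwoKN^+$ (the Gritsenko lift in even weight lands in the Fricke plus space, as noted in section~\ref{secB}, since $\epsilon=(-1)^k=+1$ for $k=2$), and $f\in\StwoKN^+(d)$, so $f^2$ and the $fg_i$ all lie in $H_4(N,d,1)^+$; this forces $\dim H_4(N,d,1)^+\ge\dim\JtwoNcusp+1$, proving~(3).

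Statement~(4) is the one requiring the most care, and I expect it to be the main obstacle. Here $f\in\StwoKN^\epsilon(d)$ for some sign $\epsilon$, and I want products of opposite Fricke sign. If $\epsilon=-1$, then $f\in\StwoKN^-(d)$ and each $g_i\in\StwoKN^+$, so $fg_i\in H_4(N,d,1)^-$; this gives $|I|=\dim\JtwoNcusp$ linearly independent elements of $H_4(N,d,1)^-$ (using only the first assertion of Lemma~\ref{H4Nlemma1}(a), that $\{fg_i\}$ is independent), hence $\dim H_4(N,d,1)^-\ge\dim\JtwoNcusp$, contradiction. If instead $\epsilon=+1$, so $\StwoKN^+(d)\ne0$, one cannot directly pair $f$ with the $g_i$ to get minus-sign products; the clean fix is to observe that statement~(4) is really a statement about $\StwoKN^-(d)$, so in the contrapositive we assume $\StwoKN^-(d)\ne0$ and take $f\in\StwoKN^-(d)$ nonzero from the outset, landing in the $\epsilon=-1$ case above. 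Thus the only genuine subtlety is bookkeeping the Fricke signs so that the products $f g_i$ sit in the space named in each hypothesis; once the sign conventions of Definition~\ref{H4Ndef} are lined up with the fact that $\GritJtwoNcusp\subset\StwoKN^+$, each of the four implications is immediate from Lemma~\ref{H4Nlemma1}(a).
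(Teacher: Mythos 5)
Your argument is correct and is essentially the paper's own proof: contraposition of each item via Lemma~\ref{H4Nlemma1}(a), with $f^2$ giving (1), the set $\lset f^2,fg_i\rset$ giving (2)--(3), and $\lset fg_i\rset$ giving (4), after checking the Fricke signs against Definition~\ref{H4Ndef}. One small caution: in (1) the parenthetical decomposition $\StwoKN(d)=\StwoKN^+(d)\oplus\StwoKN^-(d)$ is neither needed nor justified (the paper explicitly notes that $\StwoKN^\pm(d)=0$ need not force $\StwoKN(d)=0$, so docked spaces and Fricke eigenspaces do not interact that simply); negating the conclusion $\StwoKN^\pm(d)=0$ already hands you a nonzero pure-sign $f\in\StwoKN^\epsilon(d)$, which is all your argument uses.
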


\begin{proof}
(1) Every $f\in\StwoKN^\epsilon(d)$ for either of
$\epsilon=\pm1$ squares into~$H_4(N,d,d)^+$, which is~$0$, so
$\StwoKN^\pm(d)=0$.

(2) If some $f\in\StwoKN(d)$ is a nonlift then
the linear independence of $\lset f^2,fg_i:i\in I\rset$ gives $\dim
H_4(N,d,1)\ge\dim\JtwoNcusp+1$, and so the result follows by contraposition.

This same argument, but with plus spaces, gives~(3).

(4) If some $f\in\StwoKN^-(d)$ is nonzero then
the linear independence of $\lset fg_i:i\in I\rset$ gives $\dim
H_4(N,d,1)^-\ge\dim\JtwoNcusp$, and so the result follows by contraposition.
\end{proof}

\newcommand\ttS{{\tt S}}
\newcommand\ttd{\dim{\tt S}}
\newcommand\ttM{{\tt M}}
\newcommand\ttr{\operatorname{rank}{\tt M}}

A crucial idea is that we can establish computable dimension estimates
for the $H_4(N)$ spaces of Definition~\ref{H4Ndef}.  The Fourier
coefficient formula for the product of two weight~$2$ paramodular forms
$f_1,f_2\in\StwoKN$ is
$$
\fc t{f_1f_2}=\sum_{\substack{t_1,t_2\in\XtwoN\\t_1+t_2=t}}
\fc{t_1}{f_1}\fc{t_2}{f_2},\quad t\in\XtwoN.
$$
Recall that the Fourier coefficients of $f_1$
and~$f_2$ are $\GampmsupzeroN$ class functions.
For any $t\in\XtwoN$, let $m_N(t)=
\min\lset m:\smallmat n{r/2}{r/2}{mN}\in t[\GampmsupzeroN]\rset$.
(The ``$m$'' in this function's name stands for ``minimum function,''
although also the quantity being minimized is named~$m$.)
Computing $m_N(t)$ is a finite process, because for any
given~$m$ we may take $|r|\le mN$, then search for matrices
$\smallmat n{r/2}{r/2}{mN}$ such that $4nmN-r^2=4\det t$, and then
check whether each such matrix lies in~$t[\GampmsupzeroN]$.
Consider an index~$t\in\XtwoN$, and let $d$ be a positive integer.
Suppose that for any pair $t_1\times t_2$ of $\XtwoN$~matrices such
that $t_1+t_2=t$, necessarily $m_N(t_1)<d$ or $m_N(t_2)<d$.
This condition combines with the previous display to say that
$\fc t{f_1f_2}=0$ for every generating product $f_1f_2$ of~$H_4(N,d,d)^+$,
and consequently $\fc tf=0$ for all $f\in H_4(N,d,d)^+$.
Similarly, if for any pair $t_1\times t_2$ such that $t_1+t_2=t$, necessarily
$m_N(t_1)<d$, then $\fc tf=0$ for all $f\in H_4(N,d,1)$.

In the next proposition, typewriter font is used to denote variables
that we compute in practice.  We remind the reader that $\dim\SfourKN$
and $\dim\JtwoNcusp$ are known \cite{ik,ez85,sz89}, but not currently
$\dim\SfourKN^\pm$.

\begin{proposition}\label{dimH4Nboundsprop}
Let $N$ be a positive integer.
Let $\ttS^\pm$ be subspaces of $\SfourKN^\pm$,
and let $\ttS=\ttS^+\oplus\ttS^-\subset\SfourKN$.
For each~$s$ in~$\lset+,-,\text{empty character}\rset$,
let $\lset g_i^s\rset$ be a basis of~$\ttS^s$.
Let $d$ be a positive integer.
For each~$\delta$ in~$\lset d,1\rset$ and each~$s$ as above,
let $\SetT(\ttS^s)$ be a determining set of Fourier coefficient indices
for~$\ttS^s$, and let
$$
\SetT(\ttS^s,d,\delta)=\lset
t\in\SetT(\ttS^s):
\left(\begin{aligned}
&\text{if $t=t_1+t_2$ where $t_1,t_2\in\XtwoN$}\\
&\text{then }
m_N(t_1)<d\text{ or }m_N(t_2)<\delta
\end{aligned}\right)
\rset.
$$
{\rm(}For $\delta=1$, the condition $m_N(t_2)<\delta$ is
impossible, leaving a condition on~$t_1$.\/{\rm)}
Define the $\ttd^s\times|\SetT(\ttS^s,d,\delta)|$ matrix
$\ttM(\ttS^s,d,\delta)=[\fc t{g_i^s}]$.  We have the following bounds.
\begin{enumerate}
\item $\dim H_4(N,d,d)^+\le\dim\SfourKN-\ttd^--\ttr(\ttS^+,d,d)$,
\item $\dim H_4(N,d,1)^{\phantom{+}}\le\dim\SfourKN-\ttr(\ttS,d,1)$,
\item $\dim H_4(N,d,1)^+\le\dim\SfourKN-\ttd^--\ttr(\ttS^+,d,1)$,
\item $\dim H_4(N,d,1)^-\le\dim\SfourKN-\ttd^+-\ttr(\ttS^-,d,1)$.
\end{enumerate}
\end{proposition}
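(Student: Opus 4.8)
The plan is to prove all four bounds by the same mechanism: for each relevant $H_4$ space, exhibit a subspace of $\SfourKN$ on which every element of $H_4$ vanishes in a controlled set of Fourier coefficients, then bound $\dim H_4$ by the codimension of that subspace plus the rank deficiency recorded in the matrix $\ttM$. The key structural input is the paragraph preceding the proposition: if $t\in\XtwoN$ has the property that every decomposition $t=t_1+t_2$ with $t_1,t_2\in\XtwoN$ forces $m_N(t_1)<d$ (or, for the $(d,d)$ case, forces $m_N(t_1)<d$ or $m_N(t_2)<d$), then $\fc tf=0$ for every $f$ in the corresponding $H_4$ space. The indices collected in $\SetT(\ttS^s,d,\delta)$ are precisely those $t\in\SetT(\ttS^s)$ with this vanishing property, so $H_4(N,d,\delta)^s$ is annihilated by the linear functionals $f\mapsto\fc tf$ for $t\in\SetT(\ttS^s,d,\delta)$.

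First I would handle bound (2), which is cleanest because no Fricke decomposition intervenes. Every generating product $f_1f_2$ of $H_4(N,d,1)$ with $f_1\in\StwoKN(d)$ has $\fc t{f_1f_2}=0$ for all $t\in\SetT(\ttS,d,1)$, hence so does every element of $H_4(N,d,1)$. Now $\SetT(\ttS)$ is a determining set of Fourier coefficient indices for $\ttS\subseteq\SfourKN$: the map $\SfourKN\to\C^{\SetT(\ttS)}$ sending $g$ to its listed coefficients is injective on $\ttS$, and $H_4(N,d,1)$ need not lie inside $\ttS$, but the composite $\SfourKN\to\C^{\SetT(\ttS)}\to\C^{\SetT(\ttS,d,1)}$ kills $H_4(N,d,1)$. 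The image of $\ttS$ under this composite is the column span of $\ttM(\ttS,d,1)$, of dimension $\ttr(\ttS,d,1)$; since $\SetT(\ttS)$ determines $\SfourKN$ on $\ttS$ and we need a determining set for $H_4$ as well, the honest statement is that the kernel of $\SfourKN\to\C^{\SetT(\ttS,d,1)}$ restricted to $\SfourKN$ has codimension at most $\dim\ttS$ minus the number of linear relations—more directly, $H_4(N,d,1)$ lies in the kernel $K$ of $g\mapsto(\fc tg)_{t\in\SetT(\ttS,d,1)}$, and $\dim K=\dim\SfourKN-\operatorname{rank}[\fc tg]_{g\in\text{basis of }\SfourKN}$; replacing the full basis by the basis of $\ttS$ can only lower the rank, so $\ttr(\ttS,d,1)\le\operatorname{rank}$ over all of $\SfourKN$, giving $\dim H_4(N,d,1)\le\dim K\le\dim\SfourKN-\ttr(\ttS,d,1)$. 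This is exactly (2).

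For (1), (3), (4) I would refine this by splitting off the Fricke eigenspace that does not appear. Take (4): $H_4(N,d,1)^-$ is spanned by products $f_1f_2$ with $f_1\in\StwoKN^\epsilon(d)$ and $f_2\in\StwoKN^{-\epsilon}$, so $f_1f_2\in\SfourKN^-$; thus $H_4(N,d,1)^-\subseteq\SfourKN^-$. On the one hand $H_4(N,d,1)^-$ is annihilated by the coefficient functionals indexed by $\SetT(\ttS^-,d,1)$, by the same vanishing argument applied within the minus space using the determining set $\SetT(\ttS^-)$ for $\ttS^-\subseteq\SfourKN^-$. Now $\dim\SfourKN^-=\dim\SfourKN-\dim\SfourKN^+\le\dim\SfourKN-\ttd^+$, since $\ttS^+\subseteq\SfourKN^+$. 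Intersecting $\SfourKN^-$ with the common kernel of the $\SetT(\ttS^-,d,1)$-functionals removes a further $\ttr(\ttS^-,d,1)$ dimensions by the rank argument of the previous paragraph, now carried out inside $\SfourKN^-$. This yields $\dim H_4(N,d,1)^-\le\dim\SfourKN-\ttd^+-\ttr(\ttS^-,d,1)$. Bound (3) is identical with $+$ and $-$ interchanged, using $H_4(N,d,1)^+\subseteq\SfourKN^+$ and $\dim\SfourKN^+\le\dim\SfourKN-\ttd^-$; bound (1) is the same with the $(d,d)$ version of the vanishing criterion (forcing $m_N(t_1)<d$ \emph{or} $m_N(t_2)<d$) and with $H_4(N,d,d)^+\subseteq\SfourKN^+$ again bounded via $\ttd^-$ and $\ttr(\ttS^+,d,d)$.

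The main obstacle is bookkeeping rather than depth: one must be careful that "determining set of Fourier coefficient indices for $\ttS^s$" is being used correctly, namely that it is the \emph{rank} of $\ttM(\ttS^s,d,\delta)$ that measures how many independent coefficient functionals survive, and that passing from a spanning set of $\ttS^s$ to the full $\SfourKN^s$ can only decrease this rank—so the codimension estimate $\dim\SfourKN^s-\ttr(\ttS^s,d,\delta)$ is genuinely an upper bound for the dimension of the kernel, hence for $\dim H_4$. A second point requiring care is the inequality $\dim\SfourKN^{\mp}\le\dim\SfourKN-\ttd^{\pm}$: this is just $\ttS^{\pm}\subseteq\SfourKN^{\pm}$ together with the direct sum decomposition $\SfourKN=\SfourKN^+\oplus\SfourKN^-$, and it is where the unknown quantities $\dim\SfourKN^{\pm}$ get harmlessly eliminated in favor of the computable $\ttd^{\pm}$.
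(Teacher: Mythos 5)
Your proposal is correct and follows essentially the same argument as the paper: every element of the relevant $H_4$ space is annihilated by the coefficient functionals indexed by $\SetT(\ttS^s,d,\delta)$, the rank of $\ttM(\ttS^s,d,\delta)$ bounds from below the rank of that functional system on $\SfourKN^s$ (equivalently, the paper's codimension-plus-left-nullity count), and the unknown $\dim\SfourKN^\pm$ is eliminated via $\dim\SfourKN^\pm\le\dim\SfourKN-\ttd^\mp$. The only differences are cosmetic bookkeeping (kernel of the full coefficient map versus the paper's split into ``outside $\ttS^s$'' plus left nullity), so nothing further is needed.
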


\begin{proof}
We prove the first bound.
An element of~$H_4(N,d,d)^+$ either lies outside $\ttS^+$ or it lies
in~$\ttS^+$ and its Fourier series expansion truncation is~$0$
on~$\SetT(N,d,d)$.  Thus $\dim H_4(N,d,d)^+$ is at most the sum of
$\dim\SfourKN^+-\ttd^+$ and the left nullity of $\ttM(\ttS^+,d,d)$.
But $\dim\SfourKN^+\le\dim\SfourKN-\ttd^-$ and the left nullity is
$\ttd^+-\ttr(\ttS^+,d,d)$, so the stated bound follows.
The other three bounds are established similarly.
%
%
%
\end{proof}

In practice we grow our spanned subspaces~$\ttS^\pm$ of~$\SfourKN^\pm$
until $\ttd=\dim\SfourKN$ if $\JtwoNcusp=0$, or
$\dim\SfourKN-\ttd<\dim\JtwoNcusp$ if $\JtwoNcusp\ne0$.
As $d$ grows, we expect $\ttr(\ttS^s,d,\delta)$ to grow toward~$\ttd^s$,
making all four bounds in Proposition~\ref{dimH4Nboundsprop} decrease
toward $\dim\SfourKN-\ttd$, and so we expect the conditions of
Lemma~\ref{H4Nlemma2} to apply for large enough~$d$.
Building on Lemma~\ref{H4Nlemma2} and Proposition~\ref{dimH4Nboundsprop},
we have diagnostic tests for the weight~$2$ spaces as follows.

\begin{proposition}
Let $\ttS^\pm$ and~$\ttS$ be as in Proposition~\ref{dimH4Nboundsprop}.
Let $d$ be a positive integer, and let
$\ttM(\ttS^+,d,d)$, $\ttM(\ttS,d,1)$, and~$\ttM(\ttS^\pm,d,1)$ be as in
Proposition~\ref{dimH4Nboundsprop}.
\begin{enumerate}
\item Suppose that $\dim\SfourKN=\ttd^-+\ttr(\ttS^+,d,d)$.
  If $d=1$ then $\StwoKN=0$;
  if $d=2$ then $\StwoKN^+=\GritJtwoNcusp$ and $\StwoKN^-(2)=0$;
  if $d\ge3$ then $\StwoKN^\pm(d)=0$.
\item Suppose that $\dim\SfourKN-\ttr(\ttS,d,1)<\dim\JtwoNcusp+1$.
  If $d=1,2$ then $\StwoKN=\GritJtwoNcusp$;
  if $d\ge3$ then $\StwoKN^\pm(d)=0$.
\item Suppose that $\dim\SfourKN-\ttd^--\ttr(\ttS^+,d,1)<\dim\JtwoNcusp+1$.
  If $d=1,2$ then $\StwoKN^+=\GritJtwoNcusp$;
  if $d\ge3$ then $\StwoKN^+(d)=0$.
\item Suppose that $\dim\SfourKN-\ttd^+-\ttr(\ttS^-,d,1)<\dim\JtwoNcusp$.
  If $d=1$ then $\StwoKN^-=0$.
  If $d\ge2$ then $\StwoKN^-(d)=0$.
\end{enumerate}
\end{proposition}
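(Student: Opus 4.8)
The plan is to obtain each of the four conclusions by a short chain of implications: feed the numerical hypothesis into the matching bound of Proposition~\ref{dimH4Nboundsprop}, then into Lemma~\ref{H4Nlemma2}, and finally---for the base values $d=1$ and $d=2$---into the elementary facts recorded just before Lemma~\ref{H4Nlemma1} (namely $\StwoKN(1)=\StwoKN$, $\StwoKN^-(1)=\StwoKN^-$, and $\StwoKN(d)\cap\GritJtwoNcusp=0$ for $d\ge2$) together with Lemma~\ref{H4Nlemma1}(b).

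For item~(1): the hypothesis $\dim\SfourKN=\ttd^-+\ttr(\ttS^+,d,d)$ makes the right-hand side of Proposition~\ref{dimH4Nboundsprop}(1) equal to~$0$, so $H_4(N,d,d)^+=0$ and Lemma~\ref{H4Nlemma2}(1) gives $\StwoKN^\pm(d)=0$. For $d\ge3$ this is the asserted conclusion; for $d=1$ one has $\StwoKN^\pm(1)=\StwoKN^\pm$, so $\StwoKN=\StwoKN^+\oplus\StwoKN^-=0$; for $d=2$ the statement $\StwoKN^-(2)=0$ is half the conclusion, and $\StwoKN^+(2)=0$ yields $\StwoKN^+=\GritJtwoNcusp$ by Lemma~\ref{H4Nlemma1}(b). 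Items~(2),~(3),~(4) run the same way: the hypothesis and the matching part of Proposition~\ref{dimH4Nboundsprop} force $\dim H_4(N,d,1)<\dim\JtwoNcusp+1$, $\dim H_4(N,d,1)^+<\dim\JtwoNcusp+1$, $\dim H_4(N,d,1)^-<\dim\JtwoNcusp$ respectively, and Lemma~\ref{H4Nlemma2} then gives $\StwoKN(d)\subseteq\GritJtwoNcusp$, $\StwoKN^+(d)\subseteq\GritJtwoNcusp$, $\StwoKN^-(d)=0$ respectively. Item~(4) is the stated conclusion for $d\ge2$, and $d=1$ is immediate since $\StwoKN^-(1)=\StwoKN^-$. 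For items~(2) and~(3), when $d\ge3$ the containment combines with $\StwoKN(d)\cap\GritJtwoNcusp=0$ (and, in the plus-space case, with $\StwoKN^+(d)\subseteq\StwoKN(d)$) to give $\StwoKN(d)=0$ (resp.\ $\StwoKN^+(d)=0$); when $d=2$ the same reasoning gives $\StwoKN(2)=0$ (resp.\ $\StwoKN^+(2)=0$), which Lemma~\ref{H4Nlemma1}(b) promotes to $\StwoKN=\GritJtwoNcusp$ (resp.\ $\StwoKN^+=\GritJtwoNcusp$); and when $d=1$ the containment reads $\StwoKN\subseteq\GritJtwoNcusp$ (resp.\ $\StwoKN^+\subseteq\GritJtwoNcusp$), which together with the always-valid reverse inclusion---$\GritJtwoNcusp\subseteq\StwoKN$ in general, and $\GritJtwoNcusp\subseteq\StwoKN^+$ because the Gritsenko lift of weight~$2$ lands in the $+$ Fricke eigenspace---gives equality.

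There is no genuine mathematical obstacle: the proposition is a bookkeeping repackaging of Proposition~\ref{dimH4Nboundsprop} and Lemma~\ref{H4Nlemma2}. The only care required is organizational---for each of the four items and each of the ranges $d=1$, $d=2$, $d\ge3$, deciding correctly whether the output is a statement about a docked space or about a full weight~$2$ space, and invoking Lemma~\ref{H4Nlemma1}(b) and the triviality of $\StwoKN(d)\cap\GritJtwoNcusp$ for $d\ge2$ exactly where they are needed.
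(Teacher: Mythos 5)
Your proposal is correct and follows essentially the same route as the paper: the numerical hypothesis zeroes out (or caps) the relevant $H_4(N,d,\delta)$ space via Proposition~\ref{dimH4Nboundsprop}, Lemma~\ref{H4Nlemma2} converts that into a statement about the docked weight-$2$ spaces, and the cases $d=1,2$ are finished exactly as in the paper using $\StwoKN(d)\cap\GritJtwoNcusp=0$ for $d\ge2$ and Lemma~\ref{H4Nlemma1}(b). Your extra remarks (e.g.\ that weight-$2$ Gritsenko lifts land in the Fricke plus space, and that $\StwoKN^+(d)\subseteq\StwoKN(d)$) only make explicit steps the paper leaves implicit.
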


We reiterate that the conclusion $\StwoKN^-(2)=0$ in the first case
implies that Jacobi restriction to one or more terms produces a
rigorous upper bound of $\dim\StwoKN^-$, and similarly for the other
cases.

Before proving the proposition, we note that in the first case, the
condition can hold only if $\ttd=\dim\SfourKN$, so this equality
should be checked before computing~$\ttr(\ttS^+,d,d)$, and when this
equality does hold, the condition simplifies to $\ttr(\ttS^+,d,d)=\ttd^+$.
Similarly, in the second and third cases, the condition can hold
only if $\dim\SfourKN-\ttd<\dim\JtwoNcusp+1$, and this inequality
should be checked before computing $\ttr(\ttS,d,1)$ or~$\ttr(\ttS^+,d,1)$.
In the fourth case, the condition can hold only if
$\dim\SfourKN-\ttd<\dim\JtwoNcusp$, and this should be checked before
computing~$\ttr(\ttS^-,d,1)$.

\begin{proof}
For~(1), by Lemma~\ref{H4Nlemma2} and
Proposition~\ref{dimH4Nboundsprop}, $\StwoKN^\pm(d)=0$.  For $d=1$
these equalities give $\StwoKN=0$, and for $d=2$ the first equality
implies that $\StwoKN^+=\GritJtwoNcusp$ by Lemma~\ref{H4Nlemma1}(b).

For~(2), by Lemma~\ref{H4Nlemma2} and
Proposition~\ref{dimH4Nboundsprop}, $\StwoKN(d)\subset\GritJtwoNcusp$.
For $d=1$ this containment gives $\StwoKN=\GritJtwoNcusp$.  For $d=2$ it
says that $\StwoKN(2)=0$ because $\StwoKN(2)\cap\GritJtwoNcusp=0$, and
so $\StwoKN=\GritJtwoNcusp$ by Lemma~\ref{H4Nlemma1}(b).  For $d\ge3$
it says that $\StwoKN(d)=0$ because $\StwoKN(d)\cap\GritJtwoNcusp=0$.

The same argument, but with plus spaces, gives~(3).

For~(4), by Lemma~\ref{H4Nlemma2} and
Proposition~\ref{dimH4Nboundsprop}, $\StwoKN^-(d)=0$.
\end{proof}

We have run Jacobi restriction to five or more terms for all of the
spaces $\StwoKN^\pm$ where the level $N$ is composite and squarefree
in~$\lset62,\dotsc,299\rset$.  If one of the tests above applies for
some~$d\le6$ then the corresponding dimension upper estimate provided
by Jacobi restriction to five or more terms is rigorous.
The heuristic upper bounds provided by Jacobi restriction for the
just-mentioned levels~$N$ are
\begin{alignat*}2
&\dim\StwoKN^+\le\dim\JtwoNcusp&&\quad\text{for }N\ne249,295,\\
&\dim\StwoKN^+\le\dim\JtwoNcusp+1&&\quad\text{for }N=249,295,\\
&\dim\StwoKN^-=0&&\quad\text{for all }N.
\end{alignat*}
Thus our tests to certify that Jacobi restriction to five terms
gives rigorous upper bounds are as follows.

$\mathbf{H_4(N,d,d)^+}$ {\bf test}, can succeed only if $\ttd=\dim\SfourKN$:
For $d=1,\dotsc,6$, if $\ttr(\ttS^+,d,d)=\ttd^+$ then
\begin{alignat*}2
&\StwoKN=\GritJtwoNcusp
&&\quad\text{if $d=1$ or~$2$, or $d\ge3$ and $N\ne249,295$},\\
&\left(\begin{aligned}
&\dim\StwoKN^+\le\dim\JtwoNcusp+1\\
&\text{and}\ \StwoKN^-=0
\end{aligned}\right)
&&\quad\text{if $d\ge3$ and $N=249,295$}.
\end{alignat*}
For $d=1$ this test can conclude that $\StwoKN=0$, but the
given conclusion is all that we need.

$\mathbf{H_4(N,d,1)}$ {\bf test}, can succeed only if
$\dim\SfourKN-\ttd<\dim\JtwoNcusp+1$:
For $d=1,\dotsc,6$, if $\dim\SfourKN-\ttr(\ttS,d,1)<\dim\JtwoNcusp+1$ then
\begin{alignat*}2
&\StwoKN=\GritJtwoNcusp
&&\quad\text{if $d=1$ or~$2$, or $d\ge3$ and $N\ne249,295$},\\
&\left(\begin{aligned}
&\dim\StwoKN^+\le\dim\JtwoNcusp+1\\
&\text{and}\ \StwoKN^-=0
\end{aligned}\right)
&&\quad\text{if $d\ge3$ and $N=249,295$}.
\end{alignat*}

$\mathbf{H_4(N,d,1)^+}$ {\bf test}, can succeed only if
$\dim\SfourKN-\ttd<\dim\JtwoNcusp+1$: For $d=1,\dotsc,6$,
if $\dim\SfourKN-\ttd^--\ttr(\ttS^+,d,1)<\dim\JtwoNcusp+1$ then
\begin{alignat*}2
&\StwoKN^+=\GritJtwoNcusp
&&\quad\text{if $d=1$ or~$2$, or $d\ge3$ and $N\ne249,295$},\\
&\dim\StwoKN^+\le\dim\JtwoNcusp+1
&&\quad\text{if $d\ge3$ and $N=249,295$}.
\end{alignat*}

$\mathbf{H_4(N,d,1)^-}$ {\bf test}, can succeed only if
$\dim\SfourKN-\ttd<\dim\JtwoNcusp$: For $d=1,\dotsc,6$,
if $\dim\SfourKN-\ttd^+-\ttr(\ttS^-,d,1)<\dim\JtwoNcusp$ then
$$
\StwoKN^-=0.
$$

\section{Tracing Down\label{secTD}}

Let $N$ be a squarefree positive integer, and let $q$ be a prime that
does not divide~$N$.  We define and compute an averaging {\em trace down\/} 
operator
$$
{\rm TrDn}:\CFs\KNq\lra\CFs\KN.
$$
Here $\KNq$ is not a subgroup of~$\KN$, but we have the configuration
of groups (in which $\Gamma_0'(N)$ denotes $\paramodulargroup N\cap\SptwoZ$
and similarly for~$\Gamma_0'(Nq)$)
$$
\xymatrix{
\KNq \ar@{-}[dr] && \KN \\
& \KNq\cap\KN \ar@{-}[ur] & \Gamma_0'(N) \ar@{-}[u] \\
& \Gamma_0'(Nq) \ar@{-}[u] \ar@{-}[ur]
}
$$
We will find representatives $\lset g_{1i}\rset$ of the quotient
space $\Gamma_0'(Nq)\bs\Gamma_0'(N)$ and representatives
$\lset g_{2j}\rset$ of the quotient space $\Gamma_0'(N)\bs\KN$, so
that altogether $\KN=\bigsqcup_{i,j}\Gamma_0'(Nq)g_{ij}$ with
$g_{ij}=g_{1i}g_{2j}$ for all~$i,j$.  Further, because $Nq$ is
squarefree, so that $\KNq$ has only one $0$-cusp, we can decompose
each coset representative $g=g_{ij}$ as $g=\kappa u$ with
$\kappa\in\KNq$ and $u\in\PtwozeroQ$.
Thus overall the quotient space $\Gamma_0'(Nq)\bs\KN$
is $\KN=\bigsqcup\Gamma_0'(Nq)\kappa u$, and because each~$\kappa$ lies
in~$\KNq$ the trace down operator is
$$
{\rm TrDn}\,f=\sum_u f\wtk u,\quad f\in\CFs\KNq.
$$
Some of the results in this section are well known, but we assemble
them here for the sake of a complete discussion in one place.

First we study $\Gamma_0'(Nq)\bs\Gamma_0'(N)$.
Let $\projsp^3(\Z/q\Z)=(\Z/q\Z)^4_{\rm prim}/(\Z/q\Z)^\times$
where $(\Z/q\Z)^4_{\rm prim}$ consists of all vectors
$(\ol a,\ol b,\ol c,\ol d)\in(\Z/q\Z)^4$ such that the ideal
of~$\Z/q\Z$ generated by the entries $\ol a,\ol b,\ol c,\ol d$ is all
of~$\Z/q\Z$; here the overbar denotes reduction of integers modulo~$q\Z$.
We show that $\projsp^3(\Z/q\Z)$ parametrizes $\Gamma_0'(Nq)\bs\Gamma_0'(N)$.

\begin{proposition}
Let $N$ be a positive integer and let $q$ be a prime that does not divide~$N$.
\begin{itemize}
\item[{\rm(1)}]Each element $\pi$ of~$\projsp^3(\Z/q\Z)$ has a representative
  $(a,b,c,d)\in\Z^4$ such that the vector $v_\pi=(aN,bN,cN,d)$ is
  primitive, \ie, $\gcd(aN,bN,cN,d)=1$.
\item[{\rm(2)}]Each such $v_\pi$ is the bottom row of a matrix~$g_\pi$
  in~$\Gamma_0'(N)$.
\item[{\rm(3)}]The map $\projsp^3(\Z/q\Z)\lra\Gamma_0'(Nq)\bs\Gamma_0'(N)$
  that takes $\pi$ to~$\Gamma_0'(Nq)g_\pi$ is well defined.  That is,
  the coset $\Gamma_0'(Nq)g_\pi$ depends only on~$\pi$, not on any
  choices made in constructing~$v_\pi$ from~$\pi$ or $g_\pi$ from~$v_\pi$.
\item[{\rm(4)}]The map is bijective.  That is, if $\pi$ and~$\pi'$ are
  distinct in~$\projsp^3(\Z/q\Z)$ then $\Gamma_0'(N)g_\pi$ and
  $\Gamma_0'(N)g_{\pi'}$ are distinct in~$\Gamma_0'(Nq)\bs\Gamma_0'(N)$,
  and cosets $\Gamma_0'(Nq)g_\pi$ constitute all
  of~$\Gamma_0'(Nq)\bs\Gamma_0'(N)$.
  Thus $[\Gamma_0'(N):\Gamma_0'(Nq)]=|\projsp^3(\Z/q\Z)|=1+q+q^2+q^3$.
\end{itemize}
\end{proposition}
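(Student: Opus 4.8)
The plan is to prove the four items in sequence, working at the level of $\Z/q\Z$ since $q\nmid N$ means reduction mod $q$ interacts cleanly with the level-$N$ conditions. First I would establish~(1) by lifting. Given $\pi\in\projsp^3(\Z/q\Z)$, choose any integer representative $(a_0,b_0,c_0,d_0)$ of a primitive vector in $(\Z/q\Z)^4$. The issue is that $(a_0N,b_0N,c_0N,d_0)$ need not be primitive over $\Z$: a prime $p$ could divide all four entries. Since $q\nmid N$, primitivity of $(\ol a_0,\ol b_0,\ol c_0,\ol d_0)$ rules out $p=q$, so any bad $p$ divides $N$ or divides $\gcd(a_0,b_0,c_0)$ and $d_0$. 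I would fix this by a standard Dirichlet/CRT adjustment: replace $d_0$ by $d_0 + qN\cdot(\text{correction})$ and the others by $a_0 + q\cdot(\text{correction})$, etc., so as not to disturb the residues mod $q$, arranging that $\gcd$ of the new entries is~$1$. Concretely, one can first scale $(a_0,b_0,c_0)$ to a primitive triple, then use the fact that $\gcd(c,d)$ can be made $1$ by adding a multiple of $qN c$ to $d$ for suitable representatives; I'd present this as a short CRT argument rather than grind the congruences.

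For~(2), this is the classical fact that a primitive vector $v\in\Z^4$ completes to a matrix in $\SptwoZ$ (Siegel), combined with the observation that the bottom-row condition $v_\pi=(aN,bN,cN,d)$ with $N\mid$ the first three entries is exactly the bottom-row shape forced on elements of $\Gamma_0'(N)=\KN\cap\SptwoZ$; one checks from the coordinate description of $\KN$ in section~\ref{secB} that completing such a $v_\pi$ inside $\SptwoZ$ automatically lands in $\Gamma_0'(N)$, since the Klingen-type zero/divisibility relations for symplectic matrices propagate the $N$-divisibility to the needed entries (this is the same phenomenon noted for $\Ptwoone$ and for $\KN$ in the background section).

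For~(3) and~(4), the strategy is to identify $\Gamma_0'(Nq)\bs\Gamma_0'(N)$ with the orbit space of bottom rows. Two matrices $g,g'\in\Gamma_0'(N)$ lie in the same right coset $\Gamma_0'(Nq)g$ iff $g'g^{-1}\in\Gamma_0'(Nq)$; since $\Gamma_0'(Nq)$ differs from $\Gamma_0'(N)$ only by the extra congruence forcing the bottom row to be $\equiv(0,0,0,*)\pmod{q}$ (in the appropriate scaled coordinates), I would show $g'g^{-1}\in\Gamma_0'(Nq)$ iff the bottom rows of $g$ and $g'$, read modulo $q$, are proportional — i.e.\ define the same point of $\projsp^3(\Z/q\Z)$. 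Well-definedness~(3) follows because changing the lift $(a,b,c,d)$ of $\pi$ or the completion $g_\pi$ only alters the bottom row within its $\Gamma_0'(Nq)$-orbit; surjectivity and injectivity in~(4) then say precisely that bottom rows mod $q$ range over all of $\projsp^3(\Z/q\Z)$ and separate cosets. The index count $|\projsp^3(\Z/q\Z)|=(q^4-1)/(q-1)=1+q+q^2+q^3$ is immediate once the bijection is in hand. The main obstacle is item~(3)--(4): one must verify carefully, using the defining matrix inequalities/divisibilities of $\KN$ and $\KNq$, that membership in $\Gamma_0'(Nq)$ really is detected by the bottom row modulo $q$ alone, with no hidden obstruction among the other entries — this is where the hypothesis $q\nmid N$ (ensuring the level-$N$ structure and the level-$q$ structure are "independent") is doing the real work, and where I would spend the most care.
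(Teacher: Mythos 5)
Your proposal is correct and takes essentially the same route as the paper's proof: lift a representative by congruence adjustments prime to $q$ so that $(aN,bN,cN,d)$ is primitive, use the classical completion of a primitive vector to a matrix in $\SptwoZ$ together with the fact that the symplectic relations automatically force the remaining $N$- (respectively $q$-) divisibility conditions, and detect $\Gamma_0'(Nq)$-cosets inside $\Gamma_0'(N)$ by the bottom row modulo $q$ up to a unit scalar. The only small quibble is that ``first scale $(a_0,b_0,c_0)$ to a primitive triple'' fails when $(a_0,b_0,c_0)\equiv(0,0,0)\bmod q$, but your general CRT adjustment (the paper instead modifies $\delta$ by a multiple of $q$ to make it prime to $N$ and then divides the vector by its gcd) handles that case.
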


\begin{proof}
(1) Consider any element
$\pi=(\ol\alpha,\ol\beta,\ol\gamma,\ol\delta)(\Z/q\Z)^\times$
of~$\projsp^3(\Z/q\Z)$.  Take a representative
$(\alpha,\beta,\gamma,\delta+mq)$ such that $\gcd(\delta+mq,N)=1$, and
then divide through by the greatest common divisor of the entries;
because $q$ cannot divide all four entries, this has no effect on the
element~$\pi$ represented.  We now have the desired representative
$(a,b,c,d)$ of~$\pi$ such that $\gcd(aN,bN,cN,d)=1$.

(2) Because the first three entries of~$v_\pi$ are multiples of~$N$,
and because any $\SptwoZ$ matrix with such a bottom row lies
in~$\Gamma_0'(N)$, it suffices to review the standard fact that any
primitive vector $r_4\in\Z^4$ is the bottom row of a matrix in~$\SptwoZ$.
There exists $r_2\in\Z^4$ such that $r_2Jr_4'=-1$.  Take a primitive
$r_3$ such that $r_3Jr_2'=r_3Jr_4'=0$, and then take $\rho_1$ such
that $\rho_1Jr_3'=-1$.  Let $r_1=\rho_1+(\rho_1Jr_4')r_2-(\rho_1Jr_2')r_4$.
Thus $r_1Jr_3'=-1$ and $r_1Jr_2'=r_1Jr_4'=0$.
The matrix with rows $r_1$ through~$r_4$ lies in~$\SptwoZ$.

(3) Consider any two $\Gamma_0'(N)$ matrices $g_\pi$ and~$\tilde g_\pi$
arising from the same element~$\pi$ of~$\projsp^3(\Z/q\Z)$.
We want to show that $\tilde g_\pi g_\pi\inv$ lies in~$\Gamma_0'(Nq)$.
Because $\Gamma_0'(N)$ is a group, all that needs to be shown is that
the first three entries of the bottom row of $\tilde g_\pi g_\pi\inv$
are multiples of~$q$.  The matrices $g_\pi$ and~$\tilde g_\pi$ have
bottom rows $r_4=(\alpha N,\beta N,\gamma N,\delta)$ and
$\tilde r_4=(\tilde\alpha N,\tilde\beta N,\tilde\gamma N,\tilde\delta)$
with $(\tilde\alpha,\tilde\beta,\tilde\gamma,\tilde\delta)
=\lambda(\alpha,\beta,\gamma,\delta)\mymod q$ for some $\lambda$
coprime to~$q$, and so working modulo~$q$ we may replace the bottom
row~$\tilde r_4$ of~$\tilde g_\pi$ by~$\lambda r_4$, a scalar multiple
of the bottom row of~$g_\pi$.  Thus the bottom row of $\tilde g_\pi g_\pi\inv$
is a multiple modulo~$q$ of the bottom row $(0,0,0,1)$
of~$g_\pi g_\pi\inv=1_4$, and we are done.

(4) For injectivity, consider any $\pi\in\projsp^3(\Z/q\Z)$ and
consider a matrix $g_\pi\in\Gamma_0'(N)$, whose bottom row is
$(\alpha N,\beta N,\gamma N,\delta)$ where
$\pi=(\ol\alpha,\ol\beta,\ol\gamma,\ol\delta)(\Z/q\Z)^\times$.
Any $h\in\Gamma_0'(Nq)$ has bottom row $(aNq,bNq,cNq,d)$, with $q\nmid d$,
and so the product $hg_\pi$ has bottom row
$(d\alpha N+*Nq,d\beta N+*Nq,d\gamma N+*Nq,d\delta+*Nq)$,
which is $((d\alpha+*q)N,(d\beta+*q)N,(d\gamma+*q)N,d\delta+*q)$.
This shows that the coset $\Gamma_0'(Nq)g_\pi$ consists entirely of
matrices~$g_{\pi'}$.
For surjectivity, any element~$g$ of~$\Gamma_0'(N)$ has bottom row
$(aN,bN,cN,d)$ with the vector primitive.  Thus $(a,b,c,d)$ is
primitive as well, and so it represents an element~$\pi$
of~$\projsp^3(\Z/q\Z)$, and $g$ takes the form $g=g_\pi$.  The index
$[\Gamma_0'(N):\Gamma_0'(Nq)]=|\projsp^3(\Z/q\Z)|$ follows from the
bijectivity.
\end{proof}

Next we study $\Gamma_0'(N)\bs\KN$.
Let $\GamsupzeroN$ denote the subgroup of~$\SLtwoZ$ defined by the
condition $b=0\mymod N$.  We show that $\GamsupzeroN\bs\SLtwoZ$
parametrizes $\Gamma_0'(N)\bs\KN$.

\begin{proposition}
For each matrix $g_o=\smallmatabcd$ of~$\SLtwoZ$, define a
corresponding matrix
$$
g=\iota(1_2,\smallmat100Ng_o\smallmat100{1/N}) 
=\left[\begin{array}{cc|cc}
1&0&0&0\\ 0&a&0&b/N \\ 
\hline
0&0&1&0 \\ 0&cN&0&d
\end{array}\right].
$$
Then $g$ lies in~$\KN$, and the map
$\GamsupzeroN g_o\mapsto\Gamma_0'(N)g$ from $\GamsupzeroN\bs\SLtwoZ$
to $\Gamma_0'(N)\bs\KN$ is well defined and bijective.
Thus $[\KN:\Gamma_0'(N)]=[\SLtwoZ:\GamsupzeroN]=N\prod_{p\mid N}(1+1/p)$.
\end{proposition}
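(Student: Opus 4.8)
The plan is to realize the map as induced by a single injective group homomorphism and then to prove surjectivity from the lattice description of~$\KN$. Write $\psi\colon\SLtwoQ\to\SptwoQ$ for $\psi(g_o)=\iota(1_2,\smallmat{1}{0}{0}{N}g_o\smallmat{1}{0}{0}{1/N})$, so the matrix~$g$ of the statement is $\psi(g_o)$; this $\psi$ is an injective homomorphism, being the composite of conjugation by $\smallmat{1}{0}{0}{N}$ (a bijection of~$\SLtwoQ$) with the injective homomorphism $\iota(1_2,-)$. The first assertion, $g\in\KN$, is a direct check against the coordinate description of~$\KN$ in section~\ref{secB}: in the displayed matrix the entry $b/N$ occupies the one position allowed to be $*/N$, the entry $cN$ occupies a position required to be $*N$, all remaining entries are integers, and $g$ is symplectic, the one nonobvious point being $\det\smallmat{a}{b/N}{cN}{d}=ad-bc=1$.

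Well-definedness and injectivity of the coset map both follow from the identity $\psi\inv(\Gamma_0'(N))\cap\SLtwoZ=\GamsupzeroN$. Indeed, for $\gamma\in\SLtwoZ$ the matrix $\psi(\gamma)$ always lies in~$\KN$ by the previous paragraph, so $\psi(\gamma)\in\Gamma_0'(N)=\KN\cap\SptwoZ$ iff $\psi(\gamma)$ is integral iff $\smallmat{1}{0}{0}{N}\gamma\smallmat{1}{0}{0}{1/N}$ is integral iff $N$ divides the upper right entry of~$\gamma$ iff $\gamma\in\GamsupzeroN$. Since $\psi$ is a homomorphism, $\GamsupzeroN g_o=\GamsupzeroN\tilde g_o$ forces $\tilde g_og_o\inv\in\GamsupzeroN$, hence $\psi(\tilde g_o)\psi(g_o)\inv=\psi(\tilde g_og_o\inv)\in\Gamma_0'(N)$, so the map is well defined; conversely $\Gamma_0'(N)\psi(g_o)=\Gamma_0'(N)\psi(\tilde g_o)$ forces $\psi(\tilde g_og_o\inv)\in\Gamma_0'(N)$, hence $\tilde g_og_o\inv\in\GamsupzeroN$ by the identity, so the map is injective.

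The substance is surjectivity, that is, $\KN=\Gamma_0'(N)\,\psi(\SLtwoZ)$. Reading the first part in reverse shows $\psi(\SLtwoQ)\cap\KN=\psi(\SLtwoZ)$ and that this is precisely the subgroup of~$\KN$ fixing $e_1$ and~$e_3$, so it suffices to left multiply each $h\in\KN$ into that subgroup by an element of~$\Gamma_0'(N)$. I would do this in two steps, working with the lattice $L=\Z\oplus\Z\oplus\Z\oplus N\Z$ stabilized by~$\KN$. First, $he_1$ is primitive in~$L$, has fourth coordinate divisible by~$N$, and satisfies $\gcd((he_1)_1,(he_1)_3,N)=1$, since $\KN$ preserves the invariant $\gcd(v_1,v_3,N)$ of a primitive vector $v=(v_1,v_2,v_3,v_4)$ of~$L$ and this invariant equals~$1$ at~$e_1$; these three properties force $he_1$ to be primitive in~$\Z^4$ as well. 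By the standard transitivity of a symplectic congruence group on primitive vectors of prescribed congruence and divisor type, $\Gamma_0'(N)$ acts transitively on the set of such vectors, so after a left multiplication we may assume $he_1=e_1$; the symplectic relations then force the third row of~$h$ to be $(0,0,1,0)$, whence $he_3$ has a coordinate equal to~$1$ and $e_1$ together with $he_3$ spans a saturated sublattice of~$\Z^4$. Second, the analogous transitivity statement, now for the stabilizer of~$e_1$ inside~$\Gamma_0'(N)$ acting on the possible third columns (vectors $(w_1,w_2,1,w_4)$ with $w_1,w_2\in\Z$ and $N\mid w_4$), lets a further left multiplication arrange $he_3=e_3$, at which point $h\in\psi(\SLtwoQ)\cap\KN=\psi(\SLtwoZ)$. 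The main obstacle, and the step needing care, is matching the exact congruence and divisor conditions in these two transitivity steps so that every auxiliary matrix genuinely lies in $\KN\cap\SptwoZ$; the rest is bookkeeping.

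Bijectivity then gives $[\KN:\Gamma_0'(N)]=[\SLtwoZ:\GamsupzeroN]$, and $[\SLtwoZ:\GamsupzeroN]=[\SLtwoZ:\GzN]=N\prod_{p\mid N}(1+1/p)$, the first equality because $\GamsupzeroN=w\,\GzN\,w\inv$ with $w=\smallmat{0}{-1}{1}{0}$, the second being the classical index formula.
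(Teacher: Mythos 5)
Your first two paragraphs are correct and amount to what the paper dismisses with ``one readily checks'': the membership $g\in\KN$ is a pattern check, and well-definedness plus injectivity follow from the equivalence $\psi(\gamma)\in\Gamma_0'(N)\iff\gamma\in\GamsupzeroN$ for $\gamma\in\SLtwoZ$; the identification of $\psi(\SLtwoZ)$ with the subgroup of $\KN$ fixing $e_1$ and $e_3$ is also right (and your $\gcd(v_1,v_3,N)$ invariant can be justified, since it equals $\gcd(d,N)$ where $\omega(v,L)=d\Z$). The gap is in surjectivity, which is where the content of the proposition lies. You reduce it to two transitivity assertions---that $\Gamma_0'(N)=\KN\cap\SptwoZ$ acts transitively on primitive vectors $v\in\Z^4$ with $N\mid v_4$ and $\gcd(v_1,v_3,N)=1$, and that the stabilizer of $e_1$ in $\Gamma_0'(N)$ acts transitively on the admissible third columns---and you invoke them as ``standard transitivity of a symplectic congruence group on primitive vectors of prescribed congruence and divisor type.'' Neither is an off-the-shelf theorem in the form you need: transitivity of $\Sp_4(\Z)$ on primitive vectors is standard, but for the congruence subgroup $\Gamma_0'(N)$ one must prove that the stated congruence and divisor conditions cut out a single orbit, and that is exactly the strong-approximation or explicit matrix-reduction argument whose bookkeeping you defer (``the step needing care''). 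The second transitivity is in fact easy---integral translations $\smallmat{1_2}S0{1_2}$ in $\Gamma_0'(N)$ reduce the third column to $(0,0,1,w_4)$ with $N\mid w_4$, and an explicit element with $(1,2)$-entry $w_4$ and $(4,3)$-entry $-w_4$ finishes---but the first carries essentially the full difficulty of the surjectivity claim, so as written the proof is incomplete rather than wrong.

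For comparison, the paper's surjectivity argument avoids orbit considerations entirely and is a few lines long: given $h\in\KN$, read off the last column $(*,\beta_o/N,*,\delta_o)'$ of $h\inv$, set $\epsilon=\gcd(\beta_o,\delta_o)$, complete $(\beta_o/\epsilon,\delta_o/\epsilon)$ to a matrix $h_o\in\SLtwoZ$, and take $g_o=h_o\inv$, $g=\psi(g_o)$. Then $gh\inv\in\KN$, and its only possibly nonintegral entry, the $(2,4)$-entry, equals $(\delta_o/\epsilon)(\beta_o/N)-(\beta_o/(\epsilon N))\delta_o=0$, so $gh\inv\in\KN\cap\SptwoZ=\Gamma_0'(N)$ and $\Gamma_0'(N)h=\Gamma_0'(N)g$. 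If you wish to keep your structural route you must actually prove the first transitivity lemma (for instance by a Euclidean-style reduction inside $\Gamma_0'(N)$ using the unipotent and $\iota(1_2,\cdot)$-type elements available there); otherwise the direct computation above closes the argument immediately.
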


\begin{proof}
One readily checks that $g\in\KN$ and that the map
$\GamsupzeroN g_o\mapsto\Gamma_0'(N)g$ is well defined and injective.
For surjectivity, consider any element~$h$ of~$\KN$ that does not lie
in~$\Gamma_0'(N)$, and let $h\inv$ have last column
$(*,\beta_o/N,*,\delta_o)'$.
Note that $\beta_o\ne0$, because otherwise $h\inv$ lies
in~$\Gamma_0'(N)$ and hence so does~$h$.
Let $\epsilon=\gcd(\beta_o,\delta_o)$.  There is an element
$h_o=\smallmat{\alpha_o}{\beta_o/\epsilon}{\gamma_o}{\delta_o/\epsilon}$
of~$\SLtwoZ$; let $g_o=h_o\inv
=\smallmat{\delta_o/\epsilon}{-\beta_o/\epsilon}{-\gamma_o}{\alpha_o}$,
and let $g$ correspond to~$g_o$ as above.  The $(2,4)$-entry of~$gh\inv$
is the inner product of $(0,\delta_o/\epsilon,0,-\beta_o/(\epsilon N))$
and $(*,\beta_o/N,*,\delta_o)$, which is~$0$.  Thus
$gh\inv\in\KN\cap\SptwoZ=\Gamma_0'(N)$ and so $\Gamma_0'(N)h=\Gamma_0'(N)g$.
\end{proof}

Third, we study upper triangular representatives.
Theorem~1.4 of~\cite{py13} (see also \cite{i93})
specializes to show for squarefree~$Nq$
that $\SptwoQ=\KNq\PtwozeroQ$ where $\PtwozeroQ$ is the Siegel
parabolic group of rational symplectic matrices having $c$-block~$0$.
Furthermore, the proof of the theorem can be made algorithmic,
and doing so is especially easy for squarefree~$N$.  With
representatives $\lset g_{1i}\rset$ and $\lset g_{2j}\rset$ of
$\Gamma_0'(Nq)\bs\Gamma_0'(N)$ of $\Gamma_0'(N)\bs\KN$ at hand, we
thus have an algorithm to decompose the representatives
$\lset g_{ij}\rset=\lset g_{1,i}g_{2,j}\rset$ of $\Gamma_0'(Nq)\bs\KN$
as $\lset\kappa_{ij}u_{ij}\rset$ with each $\kappa_{ij}\in\KNq$ and
each $u_{ij}\in\PtwozeroQ$.
We made no attempt to optimize the computation of~$\kappa$ and~$\mu$,
and the expense of computing the $(1+q+q^2+q^3)N\prod_{p\mid N}(1+1/p)$
decompositions $g=\kappa u$ led us to use small values of~$q$ whenever
possible.  On the other hand, we needed $q$ large enough to make tracing
down hit the bulk of~$\StwoKN^+$.  We proceeded by a mixture of
experiment and feel, with the experimental attempts to span a large
subspace of~$\StwoKN^+$ time-consuming because tracing down ran slowly
even with parallel computing.
Our values of~$q$ for tracing down ranged from~$3$ to~$11$.

\medskip

The trace down operator from $\CFs\KNq$ to~$\CFs\KN$ is
$$
f\longmapsto\sum_{i,j}f\wtk{u_{ij}}.
$$
Fix $i$ and~$j$, and let $u_{ij}=u=\smallmat{d^*}b0d$.  By a
familiar computation, $f\wtk u$ has Fourier coefficients $\fc t{f\wtk u}
=(\det d)^{-k}\e(\tr(td'b))\fc{t[d']}f$,
and so the Fourier coefficients of the trace down image are
$$
\fc t{{\rm TrDn}f}=\sum_{i,j}(\det d_{ij})^{-k}
\e(\tr(td_{ij}'b_{ij}))\fc{t[d_{ij}']}f.
$$

\section{Hecke Spreading\label{secHS}}

We used Hecke operators mainly to create elements of the Fricke minus
space $\SfourKN^-$ from elements of the plus space.  For this section,
let $G$ denote the subspace $\GritJtwoNcusp$ of~$\StwoKN^+$.
A Hecke operator~$T$ of $\SfourKN$ need not respect ring structure,
and so even though $T(G)\subset G$ and $G\cdot G\subset\SfourKN^+$,
the space $T(G\cdot G)$ need not lie in~$\SfourKN^+$.
Indeed, for prime divisors~$p$ of~$N$, the operator
$T(p^2)$ can take elements of $G\cdot G$ into~$\SfourKN^-$.
Here the operators $T(n)$ for $n$ coprime to~$N$ are standard
(\cite{py15}), and we let $T(p)$ and $T(p^2)$ for $p\mid N$ denote
the operators $T_{0,1}$ and $T_{1,0}$ of \cite{robertsschmidt07};
explicit single coset decomposition formulas for these operators
appear in \cite{pykeight}.
We increased our span of~$\SfourKN$ with $T(G\cdot G)$ for various~$T$.

Our computations represented elements of~$\SfourKN^\pm$ as vectors of
Fourier coefficients indexed by $\GampmsupzeroN$-equivalence classes
in~$\XtwoN$, with a cap on the determinants  of the class
representatives $\smallmat n{r/2}{r/2}{mN}$, \ie, $4nmN-r^2\le d$.
The cap was determined from the results of Jacobi restriction
on~$\SfourKN^+$, the larger of the two Fricke eigenspaces for our
range of $N$-values.
However, for a Hecke operator $T(n)$ to return a Fourier coefficient
vector indexed by such a determinant-shell of class representatives,
it needs for its input a Fourier coefficient vector indexed by class
representatives out to determinant~$n^2d$.  Depending on various
parameters, this can raise the vector length from hundreds to hundreds
of thousands, or even well over a million.  Computing a basis of
$G\cdot G$ to so many terms was a significant computational expense,
generally limiting our Hecke spreading to $T(n)$ for~$n=4,8,9,12$.
For $N$ coprime to~$6$, Hecke spreading was not available to us as a
method to span any of~$\SfourKN^-$.  Also, Hecke spreading had little
to start with for levels~$N$ at which $G$ is small, and indeed it had
nothing to start with when~$G=0$.  Furthermore, Hecke spreading into
$\SfourKN^+$ can produce only forms having Atkin--Lehner signatures
already in~$G\cdot G$, and Hecke spreading seems unable to reach old
forms in either Fricke eigenspace when they come from minus forms.
When tracing down and Hecke spreading didn't give us enough
dimensions, we searched for Borcherds products in the space, usually
in the Fricke minus space but sometimes in the Fricke plus space as
well.  We next proceed to a discussion of these matters.

\section{Borcherds Products\label{secBP}}

The following theorem gives sufficient conditions for a Borcherds product
to be a Siegel paramodular Fricke eigenform; it is a special case of
Theorem~3.3 of \cite{gpy15}, which in turn is quoted from
\cite{gn97,gn98,grit12} and relies on the work of R.~Borcherds.  The
corollary to follow will give sufficient conditions for such a
Borcherds product furthermore to be a cusp form when its level is
squarefree.

\begin{theorem}\label{BPthm}
Let $N$ be a positive integer.  Let $\psi\in\JzeroNwh$ be a weakly
holomorphic weight~$0$, degree~$N$ Jacobi form, having Fourier expansion
$$
\psi(\tau,z)=\sum_{\substack{n,r\in\Z\\n\gg-\infty}}c(n,r)q^n\zeta^r
\quad\text{where }q=\e(\tau),\ \zeta=\e(z).
$$
Define
\begin{alignat*}2
A&=\frac1{24}c(0,r)+\frac1{12}\sum_{r\in\Zpos}c(0,r),
 &\quad 
B\phantom{_0}&=\frac12\sum_{r\in\Zpos}r\,c(0,r),\\
C&=\frac12\sum_{r\in\Zpos}r^2c(0,r),
 &\quad
D_0&=\sum_{n\in\Zneg}\sigma_0(|n|)c(n,0).
\end{alignat*}
Suppose that the following conditions hold:
\begin{enumerate}
\item $c(n,r)\in\Z$ for all integer pairs $(n,r)$ such that $4nN-r^2\le0$,
\item $A\in\Z$,
\item $\sum_{j\in\Zpos}c(j^2nm,jr)\ge0$ for all primitive integer
  triples $(n,m,r)$ such that $4nmN-r^2<0$ and $m\ge0$.
\end{enumerate}
Then for weight $k=\frac12c(0,0)$ and Fricke eigenvalue
$\epsilon=(-1)^{k+D_0}$, the Borcherds product $\BL(\psi)$ lies
in~$\MFs\KN^\epsilon$.
For sufficiently large $\lambda$, for $\Omega=\smallmat\tau
zz\omega\in\UHPtwo$ and $\xi=\e(\omega)$, the Borcherds product has
the following convergent product expression on the subset
$\lset\Im \Omega>\lambda I_2\rset$ of~$\UHPtwo$:  
$$
\BL(\psi)(\Omega)=
q^A \zeta^B \xi^C 
\prod_{\substack{n,m,r\in \Z,\ m\ge0\\
\text{if $m=0$ then $n\ge0$}\\
\text{if $m=n=0$ then $r<0$}}}
(1-q^n\zeta^r\xi^{mN})^{c(nm,r)}.
$$
Also, let $\varphi(r)=c(0,r)$ for~$r\in\Znn$, and recall the
corresponding theta block,
$$
\TB(\varphi)(\tau,z)=\eta(\tau)^{\varphi(0)}
\prod_{r\in\Zpos}(\vartheta_r(\tau,z)/\eta(\tau))^{\varphi(r)}
\qquad\text{where }\vartheta_r(\tau,z)=\vartheta(\tau,rz).
$$
On $\lset\Im \Omega>\lambda I_2\rset$ the Borcherds product is a
rearrangement of a convergent infinite series,
$$
\BL(\psi)(\Omega)
=\TB(\varphi)(\tau,z)\xi^C\exp\left(-\Grit(\psi)(\Omega)\right).  
$$
\end{theorem}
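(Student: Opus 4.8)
The plan is to deduce the first two assertions — that $\BL(\psi)$ is a holomorphic paramodular Fricke eigenform in $\MFs\KN^\epsilon$ of weight $k=\tfrac12c(0,0)$ and Fricke sign $\epsilon=(-1)^{k+D_0}$, and that it has the displayed convergent infinite product expansion on $\lset\Im\Omega>\lambda I_2\rset$ — directly from Theorem~3.3 of \cite{gpy15} (which itself rests on \cite{gn97,gn98,grit12} and the work of Borcherds), after checking that hypotheses~(1)--(3) are the specializations to this setting of that theorem's hypotheses: (1) is the integrality of the singular Fourier coefficients of~$\psi$; (2), namely $A\in\Z$, is the trivial-character condition for the associated theta block recorded in section~\ref{secB}; and (3) is the nonnegativity of the divisor multiplicities $\sum_{j\ge1}c(j^2nm,jr)$, which is what makes $\BL(\psi)$ holomorphic rather than merely meromorphic. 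The weight and Fricke-sign formulas are then read directly off the cited statement.

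What then remains is to match the product with $\TB(\varphi)(\tau,z)\,\xi^C\exp(-\Grit(\psi)(\Omega))$ on $\lset\Im\Omega>\lambda I_2\rset$, where the absolute convergence above licenses all the rearrangements below. First I would split the product into its $m=0$ factors and its $m\ge1$ factors. The $m=0$ factors are $\prod_{n\ge1,\,r\in\Z}(1-q^n\zeta^r)^{c(0,r)}\cdot\prod_{r\in\Zneg}(1-\zeta^r)^{c(0,r)}$, and using $c(0,r)=c(0,-r)=\varphi(|r|)$ (the form $\psi$ has even weight~$0$, hence is even in~$z$) together with the Jacobi triple product for $\vartheta$ and the product expansion of $\eta$, one checks that $q^A\zeta^B$ times these factors equals $\eta(\tau)^{\varphi(0)}\prod_{r\ge1}(\vartheta_r(\tau,z)/\eta(\tau))^{\varphi(r)}=\TB(\varphi)(\tau,z)$; here $A$ and $B$ are precisely the powers of $q$ and of $\zeta$ carried by the leading terms of $\eta$ and of the $\vartheta_r$. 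Second, for the $m\ge1$ factors I would pass to $-\log$, use $-\log(1-x)=\sum_{\ell\ge1}x^\ell/\ell$, and reindex by $(n',m',r')=(\ell n,\ell m,\ell r)$: the coefficient of $q^{n'}\zeta^{r'}\xi^{m'N}$ collapses to $\sum_{d\mid(n',m',r')}\tfrac1d\,c(n'm'/d^2,\,r'/d)$, which is the standard Fourier coefficient of the Gritsenko lift specialized to weight~$0$ (so the usual divisor weight $d^{k-1}$ becomes $1/d$), the inner sums being finite because $m'\ge1$ and $c(n,r)=0$ for $n\ll-\infty$. Hence $\prod_{m\ge1}(\cdots)=\exp(-\Grit(\psi)(\Omega))$, and recombining the two pieces with $\xi^C$ restored yields the stated identity. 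Comparing lowest powers of $\xi$ then shows $N\mid C$, with $\TB(\varphi)$ appearing as the Fourier--Jacobi coefficient $\phi_{C/N}$ of $\BL(\psi)$, while the $m'=1$ term of the $\xi$-expansion recovers the relation $\phi_1(\Grit(\psi))=\psi$ of section~\ref{secB}.

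I expect the real obstacle to lie in the second paragraph, not the first: getting the $m=0$ product to equal $\TB(\varphi)$ \emph{exactly} — not merely up to a scalar — requires tracking the constant and sign contributed by each $\vartheta$ and $\eta$ factor through the triple-product identities and checking that they cancel, the kind of bookkeeping in which a factor of $-1$ or $i$ is easily mislaid. A secondary point, in the first paragraph, is matching conventions with the cited theorem closely enough to land the Fricke eigenvalue precisely on $(-1)^{k+D_0}$ with $D_0=\sum_{n\in\Zneg}\sigma_0(|n|)c(n,0)$. Neither step is deep, but each is error-prone.
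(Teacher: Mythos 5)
The paper does not actually prove this theorem: it is stated as a direct quotation, ``a special case of Theorem~3.3 of \cite{gpy15},'' which in turn rests on \cite{gn97,gn98,grit12} and Borcherds, and that cited theorem already contains all three assertions, including the rearrangement $\BL(\psi)=\TB(\varphi)\,\xi^C\exp(-\Grit(\psi))$. Your first paragraph therefore coincides with the paper's justification, while your second paragraph goes beyond it by verifying the product--series identity directly; that verification is sound. Concretely, the $m=0$ bookkeeping does close without stray signs: writing $\vartheta_r/\eta=q^{1/12}\zeta^{r/2}(1-\zeta^{-r})\prod_{n\ge1}(1-q^n\zeta^r)(1-q^n\zeta^{-r})$ and using $c(0,r)=c(0,-r)$ (weight~$0$ is even), the $q$- and $\zeta$-exponents aggregate exactly to $A$ and $B$ and the remaining factors match the $m=0$ part of the Borcherds product; and your $-\log$/reindexing of the $m\ge1$ factors yields coefficients $\sum_{d\mid(n,r,m)}d^{-1}c(nm/d^2,r/d)$, which is precisely the weight-$0$ (arithmetic) lift convention used in \cite{gpy15}, so $\exp(-\Grit(\psi))$ comes out as claimed --- with the caveat that the paper never defines $\Grit$ on $\JzeroNwh$, so this convention must be imported from the cited source rather than from section~\ref{secB}. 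One small inaccuracy in an aside: $N\mid C$ does not follow from ``comparing lowest powers of $\xi$'' in the product alone (the prefactor $\xi^C$ is not a priori of the form $\xi^{mN}$); it follows only once one knows $\BL(\psi)\in\MkKN$, whose Fourier--Jacobi expansion is supported on $\xi^{mN}$. Since that remark is not needed for the theorem, this does not affect correctness.
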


We make some remarks about the theorem.
\begin{itemize}
\item $A$, $B$, $C$, and $D_0$ are finite sums.  Indeed, if the
  Fourier series expansion of $\psi$ is supported on~$n\ge n_o$ then
  it is supported on $4nN-r^2\ge4n_oN-N^2$, so especially $c(0,r)=0$
  if $r^2>N^2-4n_0N$.  Also, conditions (1) and~(3) in the theorem are
  finite to verify, (1) because the coefficients $c(n,r)$ where
  $4nN-r^2\le0$ are determined by the coefficients such that
  furthermore $n\le N/4$ and $|r|\le N$, and (3) because $\psi$ is
  supported on the pairs $(n,r)$ such that $4nN-r^2\gg-\infty$.
\item Condition (1) implies that $c(n,r)\in\Z$ for all $(n,r)$; this
  a general fact about weakly holomorphic Jacobi forms of weight~$0$.
  Because the coefficients are integral but possibly negative, the
  theta block in the theorem could be a theta block with denominator.
\item The quantity $A$ is often written as $(1/24)\sum_{r\in\Z}c(0,r)$
  and the condition $A\in\Z$ is often phrased that
  $\sum_{r\in\Z}c(0,r)$ is an integer multiple of~$24$.  We phrased
  the theorem to make clear that the $q^A$ in the product expression
  of $\BL(\psi)$ can be read off from the theta block in the series
  representation, as can the~$\zeta^B$.  The integrality of the
  coefficients and the condition $A\in\Z$ make $k$ integral because
  $c(0,0)+2\sum_{r\in\Zpos}c(0,r)$ is a multiple of~$24$, and so
  $c(0,0)$ is even.
\item The divisor of $\BL(\psi)$ is a sum of Humbert surfaces with
  multiplicities, the multiplicities necessarily nonnegative for
  holomorphy.  Let $\KN^+$ denote the supergroup of~$\KN$ obtained by
  adjoining the paramodular Fricke involution.  The sum in item~(3) is
  the multiplicity of the following Humbert surface in the divisor,
  $$
  {\rm Hum}(r^2-4nm,r)=\KN^+\lset\Omega\in\UHPtwo:
  \ip\Omega{\smallmat{n}{r/2}{r/2}{mN}}=0\rset.
  $$
  This surface lies in~$\KN^+\bs\UHPtwo$.  As the notation in the
  display suggests, this surface depends only on the discriminant
  $d=r^2-4nm$ and on~$r$, and furthermore it depends only on the
  residue class of~$r$ modulo~$2N$; this result is due to Gritsenko
  and Hulek \cite{gh98}.  We use it to parametrize Humbert surfaces as
  ${\rm Hum}(d,r)$, taking for each such surface a suitable
  $\smallmat{n}{\tilde r/2}{\tilde r/2}{mN}$ with $\gcd(n,m,\tilde r)=1$
  and $m\ge0$ and $\tilde r^2-4nmN=d$ and $\tilde r=r\mymod2N$.
\item The series representation of $\BL(\psi)$ in the theorem gives an
  experimental algorithm for the construction of holomorphic Borcherds
  products, based on using the series to calculate an initial portion of
  the Fourier--Jacobi expansion of $\BL(\psi)$.
  The algorithm and computer program for searching for suitable $\psi$
  will be covered in an article being prepared \cite{pyfindbps}; we
  used a comparatively simple part of the algorithm and program for
  the computation being reported in this article.  Our website for
  this article certifies the Borcherds products that we constructed,
  as will be discussed below.
\item The condition $\epsilon = (-1)^{k+D_0}$ where $k$ is the weight
  says in particular that for even~$k$ the Fricke eigenvalue of
  $\BL(\psi)$ is~$1$ if $\psi$ has principal part~$0$ and is~$-1$ if
  $\psi$ has principal part $1/q$.  In our computation we used such
  $\psi$ to create Fricke plus and minus eigenforms in spanning the
  spaces $\SfourKN$.
\end{itemize}

Theorem~\ref{BPthm} and Proposition~\ref{cuspidalityprop} let us determine
when a given weakly holomorphic Jacobi form of weight~$0$ and
squarefree index~$N$ gives rise through its Borcherds product to a cusp
form.

\begin{corollary}\label{BPthmcor}
Let $N$ be a squarefree positive integer, let $k$ be an odd positive
integer or let $k$ be one of $\lset2,4,6,8,10,14\rset$,
and let $\epsilon=\pm1$.
Let $\psi$, $A$, $B$, $C$, $D_0$ be as in Theorem~\ref{BPthm},
with $k=\frac12c(0,0)$ and $\epsilon=(-1)^{\tfrac12c(0,0)+D_0}$.
Suppose that the three conditions of the
theorem hold,
\begin{enumerate}
\item $c(n,r)\in\Z$ for all integer pairs $(n,r)$ such that $4nN-r^2\le0$,
\item $A\in\Z$,
\item $\sum_{j\in\Zpos}c(j^2n_om_o,jr_o)\ge0$ for all primitive integer
  triples $(n_o,m_o,r_o)$ such that $4n_om_oN-r_o^2<0$ and $m_o\ge0$.
\end{enumerate}
If $k$ is odd or $k=2$, or if $k\in\lset4,6,8,10,14\rset$ and~$C>0$,
then the Borcherds product $\BL(\psi)$ of Theorem~\ref{BPthm} lies in
the cusp form space $\SkKN^\epsilon$ and its first nonzero
Fourier-Jacobi coefficient has index~$C>0$.
\end{corollary}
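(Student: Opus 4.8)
The plan is to combine Theorem~\ref{BPthm}, which already locates $\BL(\psi)$ in the holomorphic paramodular space $\MFs\KN^\epsilon$, with the cuspidality criterion of Proposition~\ref{cuspidalityprop}. First I would invoke Theorem~\ref{BPthm}: conditions (1), (2), (3) are precisely its hypotheses, so $\BL(\psi)\in\MkKN^\epsilon$ with $k=\tfrac12c(0,0)$ and $\epsilon=(-1)^{k+D_0}$, and moreover we have the product expansion and the series representation $\BL(\psi)(\Omega)=\TB(\varphi)(\tau,z)\xi^C\exp(-\Grit(\psi)(\Omega))$ valid on $\lset\Im\Omega>\lambda I_2\rset$.

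Next, the case split. If $k$ is odd or $k=2$, then Proposition~\ref{cuspidalityprop} says outright that $\MkKN=\SkKN$, so $\BL(\psi)\in\SkKN^\epsilon$ with no further work. If instead $k\in\lset4,6,8,10,14\rset$, Proposition~\ref{cuspidalityprop} reduces cuspidality to the vanishing of the constant Fourier coefficient $\fc0{\BL(\psi)}$. So the task becomes: show that the hypothesis $C>0$ forces $\fc0{\BL(\psi)}=0$, and simultaneously identify the first nonzero Fourier--Jacobi coefficient as the one with index~$C$. Both of these should fall out of reading the series representation. The factor $\xi^C$ in $\BL(\psi)(\Omega)=\TB(\varphi)(\tau,z)\xi^C\exp(-\Grit(\psi)(\Omega))$ shows that, on the region of convergence, every monomial $q^n\zeta^r\xi^{mN}$ occurring in $\BL(\psi)$ has $mN\ge C$, equivalently $m\ge C/N$; since $\TB(\varphi)$ and $\exp(-\Grit(\psi))$ contribute only nonnegative powers of $\xi$ (the Gritsenko lift of the weakly holomorphic $\psi$ is supported in positive $\xi$-degree, as is any Jacobi cusp form factor), the $\xi$-order of $\BL(\psi)$ is exactly $C$. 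Thus the Fourier--Jacobi expansion of $\BL(\psi)$ begins at index~$C$ with coefficient equal to $\phi_{C/N}$ of the theta-block piece, which is nonzero; in particular $\BL(\psi)\in\SkKN(C/N)$. Since $C>0$, the index $(0,0)$ has $m=0<C/N$, so $\fc0{\BL(\psi)}=0$, and Proposition~\ref{cuspidalityprop} gives $\BL(\psi)\in\SkKN^\epsilon$.

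The step I expect to be the main obstacle is making the passage from ``the product/series expansion holds on $\lset\Im\Omega>\lambda I_2\rset$'' to a genuine statement about the full Fourier--Jacobi expansion of the paramodular form $\BL(\psi)$. One must argue that the Fourier coefficients read off on the neighborhood of the cusp are the actual Fourier coefficients of the globally defined modular form---this is standard for Borcherds products but deserves a sentence---and one must check that no cancellation of the leading $\xi^C$ term can occur, i.e.\ that the weight~$k$, index~$C$ Fourier--Jacobi coefficient $\phi_{C/N}(\BL(\psi))$ is genuinely nonzero. This last point is clean: that coefficient is the corresponding Fourier--Jacobi coefficient of $\TB(\varphi)(\tau,z)\exp(-\Grit(\psi))$ in its lowest $\xi$-degree, which is just $\TB(\varphi)$ times the lowest term of $\exp(-\Grit(\psi))$; since $\TB(\varphi)=\TB(c(0,\cdot))$ is a nonzero Jacobi form (a product of theta and eta quotients, hence not identically zero) and the lowest term of the exponential is~$1$, the product is nonzero. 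Hence the first nonvanishing Fourier--Jacobi coefficient of $\BL(\psi)$ has index exactly $C$, completing the proof.
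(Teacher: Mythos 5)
Your overall route is the paper's own: invoke Theorem~\ref{BPthm} to place $\BL(\psi)$ in $\MFs\KN^\epsilon$, read off from the product and series expressions that the $\xi$-order of $\BL(\psi)$ is exactly $C$ (with leading Fourier--Jacobi coefficient the nonzero theta block, so no cancellation), and then apply Proposition~\ref{cuspidalityprop} --- directly when $k$ is odd or $k=2$, and via the vanishing of $\fc{0}{\BL(\psi)}$ forced by the factor $\xi^C$ when $k\in\lset4,6,8,10,14\rset$ and $C>0$. All of that matches the paper's proof, and your remarks about transferring the expansion on $\lset\Im\Omega>\lambda I_2\rset$ to the global Fourier expansion and about the nonvanishing of the leading coefficient are correct (and slightly more careful than the paper's wording).

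The one genuine omission is the assertion $C>0$ itself, which is part of the stated conclusion but is \emph{not} a hypothesis in the case $k$ odd or $k=2$. Since the coefficients $c(0,r)$ may be negative (theta blocks with denominator), $C=\tfrac12\sum_{r\in\Zpos}r^2c(0,r)$ is not obviously positive, and your argument never establishes it in that case. The paper closes this with one extra sentence: once $\BL(\psi)$ is known to be a (nonzero) cusp form, its Fourier expansion is supported on positive definite indices in $\XtwoN$, so the index matrix $\smallmat{A}{B/2}{B/2}{C}$ of the leading term $q^A\zeta^B\xi^C$ must be positive definite, which forces $C>0$. Your own observation that the leading coefficient is nonzero supplies exactly the nonvanishing needed for this step, so adding that sentence completes your proof.
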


\begin{proof}
The Borcherds product $\BL(\psi)$ lies in $\MFs\KN^\epsilon$.
The product expression and the series representation of $\BL(\psi)$
both show that its first nonzero Fourier--Jacobi coefficient has index~$C$.
If $k$ is odd or~$k=2$ then Proposition~\ref{cuspidalityprop} says
that $\BL(\psi)$ is a cusp form.
If $k\in\lset4,6,8,10,14\rset$ and~$C>0$ then because the Borcherds
product is a multiple of $q^A\zeta^B\xi^C$, the constant term
$\fc0{\BL(\psi)}$ is~$0$, and again Proposition~\ref{cuspidalityprop}
says that $\BL(\psi)$ is a cusp form.
If $\BL(\psi)$ is a cusp form then the index matrix $\smallmat A{B/2}{B/2}C$
of its term $q^A\zeta^B\xi^C$ must be positive, requiring~$C>0$.
\end{proof}

To create weight~$4$ Borcherds products for the computation being
reported in this article, we used two constructions of Jacobi forms
$\psi\in\JzeroNwh$ that are amenable to Corollary~\ref{BPthmcor}.  For
$\SfourKN^+$ Borcherds products, we used quotients
$\psi=\psi_{12}/\Delta_{12}$ where $\psi_{12}\in\JkNcusp{12}N$ and
$\Delta_{12}\in\JkNcusp{12}0$ is the classical discriminant function.
For each level~$N$ at which we used such Borcherds products
$\BL(\psi)$, a ``BP+'' file at our website \cite{yuen16b} gives the
singular part of each~$\psi$ that we used, so that the conditions of
Corollary~\ref{BPthmcor} can be verified for it; also the file gives
the leading Fourier--Jacobi coefficient $\phi$ of each Borcherds lift,
and the file describes each $\psi$ as a linear combination of a basis
of~$\JkNcusp{12}N$, giving the linear combination vector and then
giving the basis.
For $\SfourKN^-$ Borcherds products $\BL(\psi)$, a construction
$\phi\mapsto\phi|V_2/\phi$ for $\phi\in\JkNcusp4N$, where $V_2$ is an
index-raising Hecke operator \cite{ez85}, also contributes to~$\psi$;
this construction is central in \cite{gpy15}, and it gives a weakly
holomorphic Jacobi form with integral Fourier coefficients when $\phi$
is a theta block without denominator.  For each level~$N$ at which we
used such Borcherds products, a ``BP-'' file at our website again
gives the singular part of each $\psi$, then $\phi$, which is again the
leading Fourier--Jacobi coefficient of the Borcherds product, then the
linear combination vector, and then the basis.

\section{Construction of the nonlift newforms\label{secL249295}}

At level $N=249$ and at level $N=295$, Jacobi restriction gives a
truncated Fourier--Jacobi expansion of a putative weight~$2$ Fricke plus
space nonlift~$b_N$ with first Fourier--Jacobi coefficient~$0$, \ie,
$b_N(\Omega)=\phi_2(b_N)(\tau,z)\xi^{2N}+\phi_3(\tau,z)\xi^{3N}+\cdots$.
Because the first Fourier--Jacobi coefficient is~$0$, this $b_N$ is a
nonlift in~$\StwoKN^+$ if it exists in~$\StwoKN^+$ at all.  We hope to
show that $b_N$ exists in~$\StwoKN^+$ by showing that it takes the form
$b_N=\BL(\psi_N)$ for some $\psi_N\in\JzeroNwh$, though there is no
guarantee that this happens even if $b_N$ exists in~$\StwoKN^+$.
The infinite series form of $\BL(\psi_N)$ in Theorem~\ref{BPthm} shows
that necessarily $\psi_N=-\phi_3/\phi_2$.  Thus we make an educated
guess at an element $\psi_N$ of~$\JzeroNwh$ that has the same initial
Fourier expansion as $-\phi_3/\phi_2$ would if $\phi_3$ were divisible
by~$\phi_2$.  If $b_N$ indeed exists as a Borcherds product $\BL(\psi_N)$
and we guess~$\psi_N$ well then Corollary~\ref{BPthmcor} confirms that
$b_N$ exists in $\StwoKN^+$, and Jacobi restriction has helped us find
our desired nonlift.
An article being prepared on general methods for constructing nonlifts
\cite{pyinflation} will give details on how to identify a candidate
weakly holomorphic weight~$0$ Jacobi form as arising from a quotient
of dilated theta functions that demonstrably lies in~$\JzeroNwh$.  In
this article we simply present $\psi_N$ as a product of quotients of
dilated theta functions for $N=249,295$.

Consider the following product of quotients of dilated theta functions:
$$
\psi_{249}=\frac{\vartheta_8}{\vartheta_1}
\frac{\vartheta_{18}}{\vartheta_6}\frac{\vartheta_{14}}{\vartheta_7}\,.
$$
Each quotient in the previous display takes the form
$\vartheta_d/\vartheta_e$ with $e\mid d$; this divisibility makes the
quotient a weakly holomorphic Jacobi form of weight~$0$ and index
$(d^2-e^2)/2$, and thus altogether $\psi_{249}\in\JkNwh0{249}$.
The singular part of $\psi_{249}$ up to $q^{\lfloor249/4\rfloor}$,
which determines all of the singular part, is
\begin{align*}
4
&+\zeta^{-13}+\zeta^{-12}+\zeta^{-11}+\zeta^{-10}+\zeta^{-9}
 +\zeta^{-8}+2\zeta^{-7}+3\zeta^{-6}
 +2\zeta^{-5}+2\zeta^{-4}\\
&+2\zeta^{-3}+2\zeta^{-2}+3\zeta^{-1}+3\zeta+2\zeta^2
 +2\zeta^3+2\zeta^4+2\zeta^5+3\zeta^6+2\zeta^7+\zeta^8\\&
 +\zeta^9+\zeta^{10}+\zeta^{11}+\zeta^{12}+\zeta^{13}
 +q^2(\zeta^{45}+\zeta^{-45})
 +q^5(\zeta^{71}+\zeta^{-71})\\
&+q^6(\zeta^{78}+\zeta^{-78})
 +q^7(\zeta^{84}+\zeta^{-84})
 +q^8(\zeta^{90}+\zeta^{-90})\\
&+q^{24}(\zeta^{155}+\zeta^{-155})
 +q^{26}(\zeta^{161}+\zeta^{-161})
 +q^{28}(\zeta^{167}+\zeta^{-167})\\
&-q^{43}(\zeta^{207}+\zeta^{-207})
 +q^{54}(\zeta^{232}+\zeta^{-232}).
\end{align*}
This $\psi_{249}$ satisfies the three conditions of Corollary~\ref{BPthmcor}:
the Fourier coefficients of the singular part are integers; $A=2$; and
Table~\ref{tbl:249} shows that the Humbert surface multiplicity
$m_{d,r}$ is nonnegative for all possible $(d,r)$ such that any
term in the $(d,r)$-multiplicity formula could be nonzero.
Also we have $B=63$ and $C=498$ and $D_0=0$.
Because $c(0,0)=4$, the corollary says that $\BL(\psi_{249})$ lies
in~$\StwoKlevel{249}^+$, and its first nonzero Fourier--Jacobi
coefficient has index $C=2\cdot249$.

\begin{table}
\caption{Humbert surface multiplicities of $\BL(\psi_{249})$}
\centering
\begin{tabular}{|c|c|}\hline
$(d,r)$ & $m_{d,r}$\\
\hline\hline
(1,1)   & 22\\\hline
(1,167) & 10\\\hline
(3,2)   & 10\\\hline
(4,164) &  4\\\hline
(9,3)   &  7\\\hline
(12,192)&  1\\\hline
(16,4)  &  4\\\hline
(16,170)&  1\\\hline
(21,207)&  0\\\hline 
\end{tabular}
\quad
\begin{tabular}{|c|c|}\hline
$(d,r)$  & $m_{d,r}$\\
\hline\hline
(25,5)   & 3\\\hline
(25,161) & 1\\\hline
(33,45)  & 2\\\hline
(36,6)   & 4\\\hline
(40,232) & 1\\\hline
(49,7)   & 2\\\hline
(61,71)  & 1\\\hline
(64,8)   & 1\\\hline
(81,9)   & 1\\\hline 
\end{tabular}
\quad
\begin{tabular}{|c|c|}\hline
$(d,r)$ & $m_{d,r}$\\
\hline\hline
(84,84)   & 1\\\hline
(100,10)  & 1\\\hline
(108,78)  & 1\\\hline
(121,11)  & 1\\\hline
(121,155) & 1\\\hline
(132,90)  & 1\\\hline
(144,12)  & 1\\\hline
(169,13)  & 1\\\hline 
&\\\hline
\end{tabular}
\label{tbl:249}
\end{table}

With the knowledge that running Jacobi restriction to five terms gives
us a basis of~$\StwoKlevel{249}$, we can use the expansions of the
basis to compute the action of the Hecke operator~$T(2)$.  This
operator separates the space into one-dimensional eigenspaces, with
the space spanned by the nonlift eigenform~$f_{249}$ readily
identifiable because it doesn't lie in $\GritJkNcusp2{249}$.  We have
the first two Jacobi coefficients of~$\BL(\psi_{249})$, and this is
sufficient to express $f_{249}$ as a linear combination of
$\BL(\psi_{249})$ and a Gritsenko lift,
\begin{align*}
f_{249}&=
14\,\BL(\psi_{249})\\
&\quad-6\,\Grit(\TB(2; 2,3,3,4,5,6,7,9,10,13))\\
&\quad-3\,\Grit(\TB(2;2,2,3,5,5,6,7,9,11,12 ))\\
&\quad+3\,\Grit(\TB(2;1,3,3,5,6,6,6,9,11,12 ))\\
&\quad+2\,\Grit(\TB(2; 1,1,2,3,4,5,6,9,10,15))\\
&\quad+7\,\Grit(\TB(2; 1,2,3,3,4,5,6,9,11,14)).
\end{align*}
Here $\TB(2; 2,3,3,4,5,6,7,9,10,13)$ is the weight~$2$ theta block
$\TB(\varphi)$ where $\varphi(r)=1$ for $r=2,4,5,6,7,9,10,13$ and
$\varphi(3)=2$ and $\varphi(r)=0$ for all other~$r\in\Zpos$, and
similarly for the other theta blocks.
Many Fourier coefficients of~$f_{249}$ are given at our website \cite{yuen16b}.
Thus $f_{249}$ is congruent to a lift modulo~$14$, and as noted
earlier in this article, the two isogenous Jacobians of genus~$2$
curves of conductor~$249$ defined over~$\Q$ have torsion groups
$\Z/14\Z$ and~$\Z/28\Z$.

Because $\StwoKlevel3=0$, and because $\StwoKlevel{83}$ is spanned by
Gritsenko lifts and level-raising operators take lifts to lifts, the
nonlift eigenform $f_{249}$ is in fact a newform.

To compute the $T(n)$-action on~$f_{249}$ for~$n\le25$ coprime
to~$249$, we expanded~$f_{249}$ to~$25$ Fourier--Jacobi coefficients.
The relevant eigenvalues are
$\lambda_2=-2$ and $\lambda_4=0$,
$\lambda_5=0$ and $\lambda_{25}=-3$,
$\lambda_7=-1$,
$\lambda_{11}=1$,
$\lambda_{13}=0$,
$\lambda_{17}=-1$,
$\lambda_{19}=-6$,
$\lambda_{23}=0$.
These eigenvalues determine the spin $p$-Euler factor for $p=2,5$,
because in general for $p\nmid N$ the spin $p$-Euler factor for
weight~$2$ is (see section 5.3~of \cite{andrianov09}, for example)
$$
1-\lambda_pT+(\lambda_p^2-\lambda_{p^2}-1)T^2-\lambda_ppT^3+p^2T^4.
$$
The spin $2$-Euler factor and the spin $5$-Euler factor of~$f_{249}$ are
$$
1+2T+3T^2+4T^3+4T^4,\qquad1+2T^2+25T^4,
$$
and the reader can check at the online database {\tt lmfdb.org} that these
Euler factors match those of the genus~$2$ curve class 249.a.
The Jacobian of this curve is an abelian surface of conductor~$249$
defined over~$\Q$, and it has the same Euler factors.
The spin $p$-Euler factors of~$f_{249}$ for $p=7,11,13,17,19,23$ are
also compatible with the 249.a $L$-factors at {\tt lmfdb.org}, with a
match depending on the uncomputed $\lambda_{p^2}$ in each case.

Similarly, consider
$$
\psi_{295}=\frac{\vartheta_{12}}{\vartheta_3}
\frac{\vartheta_{15}}{\vartheta_5}\frac{\vartheta_{12}}{\vartheta_6}
\frac{\vartheta_{14}}{\vartheta_7}\,.
$$
Here $\psi_{295}$ lies in~$\JkNwh0{295}$, and its singular part up to
$q^{\lfloor295/4\rfloor}$ is
\begin{align*}
4
&+\zeta^{-16}+\zeta^{-13}+\zeta^{-11}+2\zeta^{-10}
 +\zeta^{-9}+\zeta^{-8}+2\zeta^{-7}+2\zeta^{-6}
 +2\zeta^{-5}+2\zeta^{-4}\\
&+3\zeta^{-3}+2\zeta^{-2}+2\zeta^{-1}+2\zeta+2\zeta^2
 +3\zeta^3+2\zeta^4+2\zeta^5+2\zeta^6+2\zeta^7+\zeta^8\\
&+\zeta^9+2\zeta^{10}+\zeta^{11}+\zeta^{13}+\zeta^{16}
 +q^4(\zeta^{69}+\zeta^{-69})
 -q^7(\zeta^{91}+\zeta^{-91})\\
&+q^{10}(\zeta^{109}+\zeta^{-109})
 +q^{11}(\zeta^{114}+\zeta^{-114})
 +q^{12}(\zeta^{119}+\zeta^{-119})\\
&+q^{25}(\zeta^{172}+\zeta^{-172})  
 +q^{27}(\zeta^{179}+\zeta^{-179})
 +q^{28}(\zeta^{182}+\zeta^{-182})\\
&+2q^{29}(\zeta^{185}+\zeta^{-185})
 -q^{37}(\zeta^{209}+\zeta^{-209})
 +q^{41}(\zeta^{220}+\zeta^{-220})\\
&+q^{42}(\zeta^{223}+\zeta^{-223})
 +q^{44}(\zeta^{228}+\zeta^{-228})
 +q^{46}(\zeta^{233}+\zeta^{-233})\\
&+q^{59}(\zeta^{264}+\zeta^{-264}).
\end{align*}
This $\psi_{295}$ satisfies the three conditions of Corollary~\ref{BPthmcor},
similarly to~$\psi_{249}$; here the Humbert
surface multiplicities are given in Table \ref{tbl:295}.
In this case, $A=2$ and $B=68$ and $C=590$ and $D_0=0$.
Again $c(0,0)=4$, so $\BL(\psi_{295})$ lies
in~$\StwoKlevel{295}^+$ and its first nonzero Fourier--Jacobi
coefficient has index $C=2\cdot295$.

\begin{table}
\caption{Humbert surface multiplicities of $\BL(\psi_{295})$}
\centering
\begin{tabular}{|c|c|}\hline
$(d,r)$  & $m_{d,r}$\\
\hline\hline
(1,1)    &22\\\hline
(1,119)  &10\\\hline
(4,2)    &10\\\hline
(4,238)  & 4\\\hline
(5,185)  & 3\\\hline
(9,3)    & 6\\\hline
(9,233)  & 2\\\hline
(16,4)   & 4\\\hline
(16,114) & 2\\\hline
(20,220) & 1\\\hline
\end{tabular}
\quad
\begin{tabular}{|c|c|}\hline
$(d,r)$  & $m_{d,r}$\\
\hline\hline
(21,91)  & 0\\\hline
(21,209) & 0\\\hline
(25,5)   & 4\\\hline
(36,6)   & 2\\\hline
(41,69)  & 1\\\hline
(49,7)   & 2\\\hline
(64,8)   & 2\\\hline
(64,228) & 1\\\hline
(76,264) & 1\\\hline
(81,9)   & 1\\\hline
\end{tabular}
\quad
\begin{tabular}{|c|c|}\hline
$(d,r)$   & $m_{d,r}$\\
\hline\hline
(81,109)  & 1\\\hline
(84,172)  & 1\\\hline
(84,182)  & 1\\\hline
(100,10)  & 2\\\hline
(121,11)  & 1\\\hline
(169,13)  & 1\\\hline
(169,223) & 1\\\hline
(181,179) & 1\\\hline
(256,16)  & 1\\\hline
          &  \\ \hline
\end{tabular}
\label{tbl:295}
\end{table}

The Hecke operator $T(2)$ separates $\StwoKlevel{295}$ into
one-dimensional eigenspaces, and we can identify the nonlift
eigenform~$f_{295}$ and then express it as a linear combination of
$\BL(\psi_{295})$ and a Gritsenko lift,
\begin{align*}
f_{295}&=
7\,\BL(\psi_{295})\\
&\quad+4\,\Grit(\TB(2;1,1,2,3,3,4,5,10,13,16 ))\\
&\quad+\Grit(\TB(2; 1,1,2,3,4,7,8,10,11,15))\\
&\quad-2\,\Grit(\TB(2;2,3,4,4,6,7,8,10,10,14 ))\\
&\quad-\Grit(\TB(2;1,1,5,6,7,8,8,9,10,13 ))\\
&\quad-3\,\Grit(\TB(2; 1,2,3,4,5,5,8,9,13,14)),
\end{align*}
and many Fourier coefficients of $f_{295}$ are at our website.
Here the nonlift eigenform is congruent to a lift modulo~$7$,
and one of the two isogenous Jacobians of genus~$2$ curves of
conductor~$295$ defined over~$\Q$ has torsion group $\Z/7\Z$, as shown
at {\tt lmfdb.org}.
The nonlift eigenform $f_{295}$ is a newform because
$\StwoKlevel5=\StwoKlevel{59}=0$.
This time, after expanding~$f_{295}$ to sixteen Fourier--Jacobi
coefficients, our computed Hecke eigenvalues of~$f_{295}$ are
$\lambda_2=-2$ and $\lambda_4=0$,
$\lambda_3=-1$ and $\lambda_9=0$,
$\lambda_7=1$,
$\lambda_{11}=2$,
$\lambda_{13}=-2$,
excluding~$\lambda_5$ because $5$ divides~$295$.
The spin $2$-Euler factor and the spin $3$-Euler factor of~$f_{295}$ are
$$
1+2T+3T^2+4T^3+4T^4,\qquad1+T+3T^3+9T^4,
$$
and these match the corresponding Euler factors of the genus~$2$ curve
isogeny class 295.a at {\tt lmfdb.org}.
The Jacobian of this curve is an abelian surface of conductor~$295$,
having the same Euler factors.
The spin $p$-Euler factors of~$f_{295}$ for $p=7,11,13$ are
also compatible with the 295.a $L$-factors at {\tt lmfdb.org}, with a
match depending on the uncomputed $\lambda_{p^2}$ in each case.

\bibliographystyle{alpha}
\bibliography{sqfree-submit}

\end{document}